\theoremstyle{plain}
\newtheorem{theorem}{Theorem}[section]
\newtheorem{proposition}[theorem]{Proposition}
\newtheorem{lemma}[theorem]{Lemma}
\newtheorem{corollary}[theorem]{Corollary}
\newtheorem{remark}[theorem]{Remark}
\newtheorem{definition}[theorem]{Definition}
\newtheorem{notation}[theorem]{Notation}
\newtheorem{example}[theorem]{Example}
\newtheorem{conjecture}[theorem]{Conjecture}
\newcommand{\PP}{\mathbb{P}}
\DeclareMathOperator{\ord}{ord}
\newcommand{\pf}{ \mathfrak{p}}
\DeclareMathOperator{\Tail}{Tail}
\DeclareMathOperator{\Per}{Per}
\DeclareMathOperator{\PrePer}{PrePer}
\begin{document}

%% Set title, author,date.

\author{Sebastian Troncoso}
\address{Michigan State University, Michigan, USA}
\curraddr {619 Red Cedar Road
East Lansing, MI 48824, USA}
\email{troncosomath@gmail.com}

\title{Bounds for preperiodic points for maps with good reduction}
\date{}  %%you can leave this blank. So when title is called the date doesn't show.

\begin{abstract}
Let $K$ be a number field and let $\phi$ in $K(z)$ be a rational function of degree $d\geq 2$. Let $S$ be the places of bad reduction for $\phi$ (including the archimedean places). Let $\Per(\phi,K)$, $\PrePer(\phi, K)$, and $\Tail(\phi,K)$ be the set of $K$-rational periodic, preperiodic, and purely preperiodic points of $\phi$, respectively.
The present paper presents two main results. The first result is a bound for $|\PrePer(\phi,K)|$ in terms of the number of places of bad reduction $|S|$ and the degree $d$ of the rational function $\phi$. This bound significantly improves a previous bound given by J. Canci and L. Paladino.
For the second result, assuming that  $|\Per(\phi,K)| \geq 4$ (resp.\ $|\Tail(\phi,K)| \geq 3$), we prove bounds for $|\Tail(\phi,K)|$ (resp.\ $|\Per(\phi,K)|$) that depend only on the number of places of bad reduction $|S|$ (and not on the degree $d$). We show that the hypotheses of this result are sharp, giving counterexamples to any possible result of this form when $|\Per(\phi,K)| < 4$ (resp.\ $|\Tail(\phi,K)| < 3$).

\end{abstract}

% AMS keywords (used in AMS journals)
   \keywords{preperiodic point, periodic point, good reduction, uniform boundedness}

 % acknowledge support, etc
%\thanks{ PROYECT NUMBER}

%% Call title defined above.
\maketitle

%%%%%%%%%%% INTRODUCTION %%%%%%%%%%%

\section{Introduction}

Let $K$ be a number field and let $\phi \in K(z)$ be a rational function. Let $\phi^n$ denote the $n^{th}$ iterate of $\phi$ under composition and $\phi^0$ the identity map. The \emph{orbit} of  $P\in \PP^1(K)$ under $\phi$ is the set  $O_\phi(P)=\{\phi^n(P )  :  n \geq 0 \}$. A point $P \in \PP^1(K)$ is called \emph{periodic} under $\phi$ if there is an integer $n > 0$ such that $\phi^n(P)=P$.  It is called \emph{preperiodic} under $\phi$ if there is an integer $m \geq 0$ such that $\phi^m(P)$ is periodic. A point that is preperiodic but not periodic is called a \emph{tail} point. Let $\Tail(\phi,K)$, $\Per(\phi,K)$ and $\PrePer(\phi,K)$ be the sets of $K$-rational tail, periodic and preperiodic points of $\phi$, respectively.

For any morphism $\phi : \PP^N \rightarrow \PP^N$ of degree $d\geq 2$, Northcott \cite{Northcott1950} proved in 1950 that the total number of $K$-rational preperiodic points of $\phi$ is finite. In fact, from Northcott's proof, an explicit bound can be found in terms of the
coefficients of $\phi$. In 1994, Morton and Silverman \cite{MPS1994} conjectured that $|\PrePer(\phi,K)|$ can be bounded in terms of only a few basic parameters.

\begin{conjecture} [Uniform Boundedness Conjecture] $\quad$ \\
Let $K$  be a number field with $[K:\mathbb{Q}]=D$, and let $\phi $ be an endomorphism of $\PP^N$, defined over $K$. Let $d \geq 2$ be the degree of $\phi$. Then there is $C=C(D,N,d)$ such that $\phi$ has at most $C$ preperiodic points in $\PP^N(K)$.
\end{conjecture}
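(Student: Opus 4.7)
The conjecture is widely open---it is not known even for quadratic polynomials over $\mathbb{Q}$---so rather than claim a full proof I will describe the natural approach and identify the central obstruction.

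For a given $\phi$, let $S$ be the finite set of places of $K$ consisting of the archimedean places together with the places of bad reduction of $\phi$. The first step would be to bound $|\PrePer(\phi,K)|$ in terms of $D$, $N$, $d$, and $|S|$; this is precisely what good-reduction techniques deliver. Restricting to $N=1$ for concreteness, the reduction map $\PP^1(K)\to \PP^1(k_{\pf})$ at a prime $\pf\notin S$ is injective on points of $\Per(\phi,K)$ whose period is coprime to the residue characteristic, and Morton--Silverman-style arguments bound the admissible periods in terms of residue characteristics and $D$. Combining these yields a bound on $|\Per(\phi,K)|$ depending on $D$ and $|S|$. For tail points, two distinct preperiodic points landing on a common periodic point yield, via factorizations of $\phi^n(z)-\phi^n(P)$, a solution of an $S$-unit equation, and Evertse-type bounds then control the number of such solutions in terms of $|S|$ and $D$. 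This produces the kind of $(D,N,d,|S|)$-bound that appears, in refined form, as the first main result of the present paper.

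The hard part, and the real obstacle to the conjecture, is removing the dependence on $|S|$. A priori $|S|$ can be made arbitrarily large by $\mathrm{PGL}_2(K)$-conjugating $\phi$ so that its coefficients involve many primes, and the uniform bound $C(D,N,d)$ has to survive this. Because preperiodic points have Weil height zero, the usual Diophantine height-based inputs are unavailable; one must instead argue that the presence of many preperiodic points forces algebraic cancellations that restrict where $\phi$ can have bad reduction. Carrying this out unconditionally would be a genuine breakthrough, so a realistic version of the plan is conditional: assuming abc, or enough structure to pin down the minimal $S$ over $\mathrm{PGL}_2(K)$-conjugates of $\phi$, one could hope to transmute the $|S|$-dependent estimate into the uniform bound $C(D,N,d)$.

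A more modest and realistic target, and the one pursued in the abstract, is to sharpen the $|S|$-dependence and to exploit situations where having many periodic (resp.\ tail) points automatically constrains the number of tail (resp.\ periodic) points independently of $d$; the hypotheses $|\Per(\phi,K)|\geq 4$ and $|\Tail(\phi,K)|\geq 3$ are presumably calibrated precisely to provide this rigidity, since four points on $\PP^1$ have a nontrivial cross-ratio that rigidifies the conjugating automorphism. I would therefore proceed by first proving the $|S|$-dependent bound via good reduction and $S$-unit equations as above, and then treating the degree-independence statements as a separate rigidity argument rather than as a direct route to the full conjecture.
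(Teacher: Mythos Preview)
Your assessment is correct: the statement is a conjecture, not a theorem, and the paper does not prove it. The paper explicitly notes that it ``seems extremely difficult to prove even in the simpler case when $(K,N,d)=(\mathbb{Q},1,2)$,'' and its contributions are precisely the $|S|$-dependent bounds you describe as the realistic intermediate target.

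Your sketch of the paper's strategy is broadly accurate but differs in mechanism. The paper does not use reduction-map injectivity or Morton--Silverman period bounds; instead, the key input is \Cref{cor. S-unit}, which shows that a $K$-rational tail point and an unrelated $K$-rational periodic point have $\pf$-adic logarithmic distance zero outside $S$. Given three tail points (resp.\ four periodic points), this produces an $S$-unit equation directly, and Beukers--Schlickewei gives the bound. The reason for the thresholds $|\Tail|\geq 3$ and $|\Per|\geq 4$ is not a cross-ratio rigidity argument; it is simply that three linear relations of the form $x_iy-y_ix=u_i$ are needed to eliminate $[x:y]$ and land on a two-variable $S$-unit equation, and in the periodic case one extra point is needed to cover the exceptional orbits excluded by the hypothesis of \Cref{cor. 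S-unit}. Your remark that the obstacle to the full conjecture is removing the $|S|$-dependence is exactly right, and the paper makes no claim in that direction.
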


The conjecture seems extremely difficult to prove even in the simpler case when $(K,N,d)=(\mathbb{Q},1,2)$. Further, in this case, explicit conjectures have been formulated. For instance, Poonen \cite{Poonen1998} conjectured an explicit bound when $\phi$ is a quadratic polynomial map over $\mathbb{Q}$. Since every such quadratic polynomial map is conjugate to a polynomial of the form $\phi_c(z)=z^2+c$ with $c\in \mathbb{Q}$ we can state Poonen's conjecture as follows: Let $\phi_c \in \mathbb{Q}[z]$ be a polynomial of degree 2 of the form $\phi_c(z)=z^2+c$ with $c\in \mathbb{Q}$. Then $ | \PrePer(\phi_c,\mathbb{Q}) | \leq 9$. B. Hutz and P. Ingram \cite{HI2013} have shown that Poonen's conjecture holds when the numerator and denominator of $c$  don't exceed  $10^8$.

This work has two main contributions. The first result gives a bound for $|\PrePer(\phi,K)|$ in terms of the number of places of bad reduction $|S|$ and the degree $d$ of the rational function $\phi$. This bound significantly improves a previous bound given by J. Canci and L. Paladino  \cite{CP2014}.

In the second result, assuming that  $|\Per(\phi,K)| \geq 4$ (resp. $|\Tail(\phi,K)| \geq 3$), we prove bounds for $|\Tail(\phi,K)|$ (resp.\ $|\Per(\phi,K)|$) that depend only on the number of places of bad reduction $|S|$  and $[K:\mathbb{Q}]$ (and not on the degree $d$). We show that the hypotheses of this result are sharp, \Cref{example1} and \Cref{example2} give counterexamples to any possible result of this form when $|\Per(\phi,K)| < 4$ (resp.\ $|\Tail(\phi,K)| < 3$).

\begin{theorem}
Let $K$ be a number field and $S$ a finite set of places of $K$ containing all the archimedean ones. Let $\phi $ be an endomorphism of $\PP^1$, defined over $K$, and $d \geq 2$ the degree of $\phi$. Assume $\phi$ has  good reduction outside $S$.
\begin{enumerate}

\item [(a)] \label{th 3 periodic}
If there are at least three $K$-rational tail points of $\phi$ then
$$|\Per(\phi,K)| \leq 2^{16|S|}+3. $$

\item [(b)] \label{th 4 preperiodic}
If there are at least four $K$-rational periodic points of $\phi$ then
$$|\Tail(\phi,K)| \leq 4(2^{16|S|}).$$
\end{enumerate}
\end{theorem}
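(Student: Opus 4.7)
The plan is to translate both parts into $S$-unit equations in one variable and apply the Beukers--Schlickewei bound $2^{16|S|}$ on the number of solutions to $x+y=1$ with $x,y\in\mathcal{O}_S^*$. The mechanism is to normalize by a Möbius transformation $\sigma\in\mathrm{PGL}_2(K)$ that places three of the hypothesized $K$-rational preperiodic points at $\{0,1,\infty\}$, after which the coordinate of each additional preperiodic point of the conjugate $\phi^\sigma=\sigma\phi\sigma^{-1}$ becomes the variable entering the $S$-unit equation. Since $\sigma$ is $K$-rational, one can arrange (by enlarging $S$ by a controlled amount if needed) that $\phi^\sigma$ still has good reduction outside $S$; this step must be monitored carefully so that the final bounds remain in terms of the original $|S|$.

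For part (a), I pick three distinct $K$-rational tail points $T_1,T_2,T_3$ and take $(\sigma(T_1),\sigma(T_2),\sigma(T_3))=(0,1,\infty)$. The crux is the claim that for every $K$-rational periodic point $P$ of $\phi^\sigma$ with coordinate $\lambda\in K$, both $\lambda$ and $1-\lambda$ lie in $\mathcal{O}_S^*$; equivalently, $P$ does not reduce to any of $0,1,\infty$ at any $v\notin S$. This should follow from the principle that under good reduction at $v$ a $K$-rational periodic point and a $K$-rational tail point cannot share a reduction at $v$: iterating $\phi$ modulo $v$ would force a tail orbit to become exactly periodic, contradicting the strict preperiodicity. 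The claim then produces a solution $(\lambda,1-\lambda)$ of $x+y=1$ in $\mathcal{O}_S^*$, distinct $P$ give distinct solutions, and Beukers--Schlickewei bounds these by $2^{16|S|}$. The extra $+3$ in $2^{16|S|}+3$ is absorbed by the normalization (periodic points whose $\sigma$-image coincides with one of $0,1,\infty$ fall outside the $S$-unit count and must be added back separately).

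Part (b) is handled symmetrically: pick three of the four $K$-rational periodic points, normalize them to $\{0,1,\infty\}$, and let the fourth become some auxiliary $\alpha\in K\setminus\{0,1,\infty\}$. For each $K$-rational tail point $T$ with coordinate $\beta$, the same separation principle now gives $\beta,1-\beta\in\mathcal{O}_S^*$, and Beukers--Schlickewei contributes at most $2^{16|S|}$ values of $\beta$. The multiplicative factor $4$ in $4\cdot 2^{16|S|}$ is most naturally explained by running the argument with each of the $\binom{4}{3}=4$ possible triples chosen from the four periodic points, or equivalently by using the fourth point $\alpha$ to split $\mathrm{Tail}(\phi,K)$ into four auxiliary classes that are each controlled by one $S$-unit equation.

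The main obstacle, in both parts, is establishing the separation claim in sharp enough form, namely that a $K$-rational periodic point and a $K$-rational tail point of a map with good reduction outside $S$ are $v$-adically distinct for every $v\notin S$ (or, if they can share a reduction, that this coincidence can be packaged into a bounded correction term consistent with the stated constants). Once that technical lemma is in hand, the rest of the argument is a largely mechanical application of the Beukers--Schlickewei bound combined with bookkeeping for the normalization, so the real arithmetic content is concentrated in this separation step and in verifying that the conjugation $\phi\mapsto\phi^\sigma$ does not spoil good reduction.
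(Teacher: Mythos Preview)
Your overall strategy---reduce to an $S$-unit equation via a separation lemma between periodic and tail points, then apply Beukers--Schlickewei---is exactly the paper's approach. However, several details are misstated or left unresolved, and a couple of them are genuine gaps.

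First, the separation claim is false as you state it, and your heuristic for it does not work. It is \emph{not} true that a $K$-rational periodic point and a $K$-rational tail point are always $v$-adically distinct at places of good reduction: if $R$ is a tail point whose orbit enters the cycle at $Q$, then $R$ and $Q$ may well have the same reduction (the reduced orbit is simply periodic---no contradiction). The correct statement, which the paper proves directly from a factorization of $L_Q(F_{nm},G_{nm})$, is: if $R$ is a tail point whose periodic part has period $n$, and $P$ is any periodic point \emph{other than} the single point $\phi^{mn}(R)$ to which $R$ eventually maps, then $\delta_{\mathfrak p}(P,R)=0$ for all $\mathfrak p\notin S$. This exception is exactly what produces the ``$+3$'' in part~(a) (one excluded periodic point per tail point) and the factor~$4$ in part~(b) (for each tail point $R$, at most one of the four periodic points can be the bad one, so some triple of three always works). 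Your explanation of the $+3$ as ``periodic points whose $\sigma$-image is one of $0,1,\infty$'' cannot be right, since $0,1,\infty$ are the images of tail points, and a point cannot be both periodic and tail.

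Second, the worry about $\phi^\sigma$ losing good reduction is a red herring, but in its place a real issue appears. You never need good reduction of $\phi^\sigma$: the separation lemma is applied to the original $\phi$, and $\sigma$ is used only to form the cross-ratio $\lambda=\sigma(P)$. However, your assertion that $\lambda,\,1-\lambda\in\mathcal O_S^*$ is generally false, because $v_{\mathfrak p}(\lambda)=\delta_{\mathfrak p}(T_2,T_3)-\delta_{\mathfrak p}(T_2,T_1)$, and tail--tail distances are not controlled. What is true is that $v_{\mathfrak p}(\lambda)$ and $v_{\mathfrak p}(1-\lambda)$ are \emph{independent of $P$}, so $(\lambda,1-\lambda)$ lies in a fixed $(\mathcal O_S^*)^2$-coset; equivalently one obtains $Au+Bv=1$ with $A,B\in K^*$ fixed and $u,v$ $S$-units. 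This is exactly the form the paper derives (working with coordinates rather than cross-ratios, and handling the class-group obstruction by passing to a controlled extension where the unit rank is still $|S|-1$), and Beukers--Schlickewei then gives the bound $2^{16|S|}$.
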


Using the previous theorem, we can deduce a bound for $\PrePer(\phi,K)$ in terms of $|S|$ and the degree of $\phi$ for any endomorphism of $\PP^1$.

\begin{corollary} \label{theorem 3}
Let $K$ be a number field and $S$ a finite set of places of $K$ containing all the archimedean ones. Let $\phi $ be an endomorphism of $\PP^1$, defined over $K$, and $d \geq 2$ the degree of $\phi$. Assume $\phi$ has  good reduction outside $S$. Then
\begin{enumerate}
\item [(a)] $|\Per(\phi,K)| \leq  2^{16|S|d^3}+3.$

\item [(b)] $|\Tail(\phi,K)| \leq  4(2^{16|S|d^3}) .$

\item [(c)] $|\PrePer(\phi,K)| \leq 5(2^{16|S|d^3})+3.$

\end{enumerate}

\end{corollary}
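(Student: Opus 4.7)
My plan is to reduce \Cref{theorem 3} to the preceding theorem by a case analysis on $N_P := |\Per(\phi,K)|$ and $N_T := |\Tail(\phi,K)|$, passing to a finite extension of degree at most $d^3$ whenever the hypothesis of the preceding theorem is not already available over $K$. I would prove (a) and (b) separately and then obtain (c) by adding them.

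For (a): if $N_T \ge 3$, the preceding theorem immediately gives the stronger bound $N_P \le 2^{16|S|}+3\le 2^{16|S|d^3}+3$. If instead $N_T\le 2$, I would select a finite extension $K'/K$ with $[K':K]\le d^3$ so that $|\Tail(\phi,K')|\ge 3$. Writing $S'$ for the places of $K'$ lying above the places in $S$, one has $|S'|\le [K':K]\cdot|S|\le d^3|S|$, $\phi$ still has good reduction outside $S'$, and the preceding theorem applied over $K'$ gives
\[
N_P\le |\Per(\phi,K')|\le 2^{16|S'|}+3\le 2^{16|S|d^3}+3,
\]
since $K$-rational periodic points are in particular $K'$-rational. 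Part (b) is symmetric: if $N_P\ge 4$ the preceding theorem is applied directly over $K$, and otherwise one passes to an extension $K'/K$ of degree $\le d^3$ with $|\Per(\phi,K')|\ge 4$ and applies it over $K'$ to obtain
\[
N_T\le |\Tail(\phi,K')|\le 4\cdot 2^{16|S'|}\le 4\cdot 2^{16|S|d^3}.
\]
Part (c) then follows by adding the bounds in (a) and (b).

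The principal obstacle is the construction of the extension $K'$ of degree at most $d^3$ carrying the required supply of rational periodic or tail points. For periodic points, the natural device is the third iterate $\phi^3$: the polynomial $\phi^3(z)-z$ has degree $d^3+1$, its roots are exactly the periodic points of $\phi$ of period dividing $3$, and a careful analysis of their Galois orbits yields a field of degree $\le d^3$ containing at least four such roots. For tail points, starting from a $K$-rational (or small-extension) periodic point $P$ of period $\ge 2$, the preimages of $P$ under $\phi$ are the $d$ roots of $\phi(z)-P$, of which $d-1$ are tail points, and three of them can be successively adjoined in total degree at most $d(d-1)(d-2)\le d^3$. Verifying that the adjoined points are genuinely of the intended type (tail versus periodic), treating the degenerate subcases ($N_P=0$, $P$ is necessarily a fixed point, and so on), and keeping the cumulative degree within $d^3$ throughout is the most delicate step I anticipate in carrying out this plan.
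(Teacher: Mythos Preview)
Your reduction strategy---pass to an extension $K'/K$ of degree at most $d^3$, apply the preceding theorem there, and use $|S'|\le d^3|S|$---is exactly the paper's. The differences, and the gaps, are in the two extension constructions.

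For part (a) your proposed source of three tail points is the fibre $\phi^{-1}(P)$ over a single periodic point $P$ of period $\ge 2$: you take three of the $d-1$ tail preimages. This fails for $d\in\{2,3\}$, where there are at most two such preimages, and even for larger $d$ it needs $P$ not to be too ramified. The paper's construction is different and works uniformly: assuming without loss of generality that $|\Per(\phi,K)|\ge 3$ (otherwise the bound is trivial), Riemann--Hurwitz gives at most two totally ramified points, so some $K$-rational periodic point has a non-periodic preimage $P_1\in\overline K$; one then takes $P_2\in\phi^{-1}(P_1)$ and $P_3\in\phi^{-1}(P_2)$. Each adjunction has degree $\le d$, and $P_1,P_2,P_3$ are three distinct tail points in an extension of degree $\le d^3$.

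For part (b) your ``careful analysis of Galois orbits'' of the roots of $\phi^3(z)-z$ hides a genuine obstacle: it is possible that $\phi$ has no primitive $3$-periodic point in $\overline K$ at all, so no root of $\Phi^*_{\phi,3}$ exists to adjoin. The paper handles this with Baker's theorem (no primitive $3$-cycle forces $d=2$) together with Kisaka's classification, which supplies a $2$-cycle and two distinct fixed points; adjoining these and using the assumed $K$-rational periodic point keeps the degree within $d^3=8$. When a primitive $3$-periodic point does exist, the paper adjoins a root of the dynatomic polynomial $\Phi^*_{\phi,3}$ (degree $d^3-d$), picking up a full $3$-cycle and hence, together with one $K$-rational periodic point, at least four periodic points. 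Note that your polynomial $\phi^3(z)-z$ has degree $d^3+1$, so an irreducible factor could already exceed the budget $d^3$; working with $\Phi^*_{\phi,3}$ (and $\Phi^*_{\phi,1}$) rather than $\phi^3(z)-z$ is what makes the degree bookkeeping close.
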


These bounds depend, ultimately, on a reduction to $S$-unit equations. Using a reduction to Thue-Mahler equations instead, we obtain a better bound for $|\Tail(\phi,K)|$ in terms of $|S|$ and $d$.

\begin{theorem} \label{theorem 4}
Let $K$ be a number field and $S$ a finite set of places of $K$ containing all the archimedean ones. Let $\phi $ be an endomorphism of $\PP^1$, defined over $K$, and $d \geq 2$ the degree of $\phi$. Assume $\phi$ has  good reduction outside $S$. Then
$$|\Tail(\phi,K) | \leq d\max\left\{  (5 * 10^6 (d^3+1))^{|S|+4} ,4(2^{64(|S|+3)}) \right \}.$$

\end{theorem}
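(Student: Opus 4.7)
The plan is to mirror the structure of \Cref{theorem 3}, but to replace its $S$-unit input (which costs a factor of $d^3$ in the exponent of $2$) with a Thue--Mahler input that is quantitatively sharper in $d$. If $|\Per(\phi,K)|\geq 4$ then Theorem 1.1(b) directly gives $|\Tail(\phi,K)|\leq 4\cdot 2^{16|S|}$, which is already subsumed by the second term of the claimed maximum, so the substantive case is $|\Per(\phi,K)|\leq 3$, in which some extension of the ground field becomes unavoidable.

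For the second term $4\cdot 2^{64(|S|+3)}$, I would extend $K$ to a field $L$ of degree at most four in which $\phi$ acquires at least four periodic points---for instance by adjoining missing fixed points (roots of $\phi(z)-z$), or, when $d=2$, roots of a short dynatomic polynomial. One then checks that the set of bad places $S_L$ for $\phi$ over $L$ has size at most $4(|S|+3)$: the factor $4$ counts places of $L$ lying above each place of $S$, and the additive constant absorbs the handful of new places forced by the discriminant of $L/K$. Theorem 1.1(b) applied over $L$, together with the containment $\Tail(\phi,K)\subseteq\Tail(\phi,L)$, then yields this term.

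For the sharper first term $(5\cdot 10^6(d^3+1))^{|S|+4}$, I would avoid extending $K$ altogether and instead set up a Thue--Mahler equation directly. Fix a periodic point $\alpha\in\bar K$. Every $K$-rational tail point $P$ eventually lands in the forward orbit of $\alpha$, so $\phi^k(P)=\phi^j(\alpha)$ for some $k,j\geq 0$. Writing $\phi=F/G$ homogeneously with $F,G\in\mathcal{O}_S[X,Y]$ of degree $d$, after at most three iterates the homogeneous coordinates of $P$ are constrained by a binary form of degree at most $d^3+1$ whose value on an $S$-integral representative of $P$ is forced to be an $S$-unit by the good-reduction hypothesis. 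Evertse's bound $(5\cdot 10^6 n)^{|S|+4}$ on the number of solutions of a Thue--Mahler equation of degree $n$, applied with $n=d^3+1$, gives the first term; the outer factor $d$ absorbs the $d$ immediate preimages of $\alpha$ under $\phi$, each of which initiates its own family of tails.

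The main obstacle, as I see it, lies in the Thue--Mahler setup: verifying that the tail condition really packages each $K$-rational tail into a single Thue--Mahler equation of degree at most $d^3+1$, and---more delicately---that the right-hand side is an $S$-unit rather than merely an $S$-integer. This will require a careful resultant and good-reduction computation (of the kind already used in the proof of Theorem 1.1(b)) to rule out divisibility by any prime outside $S$, since a naive approach would give only $S$-integrality and miss the Thue--Mahler hypothesis.
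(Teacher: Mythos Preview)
Your overall shape is right—split on $|\Per(\phi,K)|$, invoke Theorem~1.1(b) when there are $\geq 4$ periodic points, and use a Thue--Mahler input otherwise—but several of the mechanisms you propose are not the ones that actually work, and the real proof hinges on points you have not identified.

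\textbf{The Thue--Mahler form.} Your plan to obtain a binary form of degree $\leq d^3+1$ by ``iterating $\phi$ at most three times'' does not produce a Thue--Mahler equation: the iterate polynomials $F_n,G_n$ are not irreducible, and nothing forces their values at a tail point to be $S$-units. The paper instead fixes a $\bar K$-rational periodic point $P_*=[a:b]$ of period $\leq 3$ that is \emph{not} $K$-rational, sets $E=K(P_*)$, and takes the norm form $F(X,Y)=N_{E/K}(aY-bX)$. This form is irreducible over $K$ of degree $[E:K]$. Corollary~\ref{cor. S-unit} gives $\delta_{\pf}(P,P_*)=0$ for every tail $P$ and every $\pf$ outside the (suitably enlarged) $S$, precisely because $P_*\notin\PP^1(K)$ cannot lie in the orbit of $P$; this immediately yields $F(x_P,y_P)\in R_S^*$, with no resultant computation needed. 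Evertse's theorem then applies \emph{provided} $[E:K]\geq 3$. The two terms in the maximum thus arise from a dichotomy inside the argument: either such a $P_*$ with $[E:K]\geq 3$ exists (Thue--Mahler term), or every $\bar K$-periodic point of period $\leq 3$ has degree $\leq 2$ over $K$, in which case a degree-$\leq 4$ extension acquires four periodic points and Theorem~1.1(b) gives the $4\cdot 2^{64|S'|}$ term. They are not two independent routes to the same bound.

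\textbf{The $+3$ and the factor $d$.} The additive $+3$ in the exponent has nothing to do with the discriminant of an extension (good reduction is stable under base change, so no new bad places appear). It comes from enlarging $S$ by up to three primes so that $\phi$ and certain auxiliary points admit normalized representatives with respect to the enlarged $S$; this is how one kills the class-group obstruction to writing each tail point $P=[x_P:y_P]$ with $(x_P,y_P)R_{S'}=R_{S'}$, which is exactly what is needed to conclude that the norm form takes an $S'$-unit (not merely $S'$-integer) value. When $|\Per(\phi,K)|\in\{2,3\}$ this normalization succeeds for every tail point. When $|\Per(\phi,K)|=1$, a more elaborate ideal-class argument using a dynatomic polynomial shows only that $(x_P,y_P)^d$ is principal, hence that $\phi(P)$ (rather than $P$) can be normalized; one bounds the set of non-initial tail points and then multiplies by $d$ to recover the initial ones. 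That, not the preimages of a fixed $\alpha$, is the source of the outer factor $d$.
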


To get a similar  bound for $|\Per(\phi,K)|$ we need to assume that $\phi$ has at least one $K$-rational tail point. Under this assumption, using Theorem \ref{th 4 preperiodic} and results about Thue-Mahler equation, we can get:

\begin{theorem} \label{theorem 5}
Let $K$ be a number field and $S$ a finite set of places of $K$ containing all the archimedean ones. Let $\phi $ be an endomorphism of $\PP^1$, defined over $K$, and $d \geq 2$ the degree of $\phi$. Assume $\phi$ has  good reduction outside $S$. If $\phi$ has at least one $K$-rational tail point then
$$|\Per(\phi,K)| \leq   \max \left\{  (5 * 10^6 (d-1))^{|S|+3} ,4(2^{128(|S|+2)}) \right\}+1.$$
\end{theorem}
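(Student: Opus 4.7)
My plan is to argue by a case split on $|\Per(\phi,K)|$, reducing the nontrivial case to a Thue--Mahler equation whose form has degree $d-1$. If $|\Per(\phi,K)|\le 3$ the bound holds trivially, so assume $|\Per(\phi,K)|\ge 4$. Then \Cref{th 4 preperiodic} applies and yields $|\Tail(\phi,K)|\le 4\cdot 2^{16|S|}$. Let $Q$ be the given $K$-rational tail point and $P_0:=\phi^{m}(Q)$ its first periodic iterate; the bound just obtained gives $m\le 4\cdot 2^{16|S|}$.

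Next I would produce the Thue--Mahler equation that will bound the other periodic points. Fix a second periodic point $R\neq P_0$ (which exists because $|\Per(\phi,K)|\ge 4$) and change coordinates on $\PP^1$ so that $(P_0,R,Q)\mapsto(0,\infty,1)$. The standard good-reduction argument--the $S$-adic chordal distance between distinct $K$-rational preperiodic points is an $S$-unit--forces every other $K$-rational preperiodic point $P$ to satisfy $P,\,P-1\in\mathcal{O}_S^{\,*}$ in the new chart. For each periodic point $P\notin\{P_0,R\}$, the defining relation $\phi^n(P)=P$ has the unusable degree $d^n$, but the presence of $Q$ at the concrete value $1$ allows me to replace the iterated identity by a single-step identity involving $\phi(Q)=\phi(1)$, cancelling the ``spurious'' factors coming from $P_0$, $R$, and the orbit of $Q$. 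What remains is a binary form $F(x,y)\in\mathcal{O}_S[x,y]$ of degree $d-1$ with $F(x_P,y_P)\in\mathcal{O}_S^{\,*}$ for the projective coordinates of each such $P$--that is, a Thue--Mahler equation. Applying Evertse--Gy\H{o}ry's bound for $S$-integer solutions of a Thue--Mahler equation of form-degree $d-1$ gives at most $(5\cdot 10^6(d-1))^{|S|+3}$ such $P$, and adding $P_0$ itself produces the first term of the maximum plus the $+1$.

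The second term $4\cdot 2^{128(|S|+2)}$ in the maximum is the fallback bound in the degenerate regime--very small $d$, or the situation in which the extracted form collapses below degree $d-1$. In that case I would use \Cref{th 4 preperiodic} together with a standard $S$-unit-equation bound in three variables (carried along by the same normalization $(P_0,R,Q)\mapsto(0,\infty,1)$) to obtain the claimed exponent.

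The main obstacle is the elimination step: extracting a binary form of degree exactly $d-1$--rather than $d^n$--from $\phi^n(P)=P$. Without a $K$-rational tail point one is forced to iterate $\phi$ and the degree blows up; the tail point supplies the concrete coordinate value $1$ whose single image $\phi(1)$ replaces the full $n$-fold iteration. Making this elimination uniform in the period $n$, and verifying that the resulting binary form has coefficients in $\mathcal{O}_S$ and the claimed degree $d-1$, is the delicate part of the argument and is precisely what controls the dependence on $d$ in the final bound.
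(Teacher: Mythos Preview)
Your argument rests on a claim that is too strong: it is not true that the $\pf$-adic logarithmic distance between any two distinct $K$-rational preperiodic points vanishes at every prime of good reduction. Corollary~\ref{cor. S-unit} only gives $\delta_\pf(P,R)=0$ when $R$ is a \emph{tail} point and $P$ is a \emph{periodic} point different from the one periodic point singled out there; for two periodic points there is no such statement (for $\phi(z)=z^2$ over $\mathbb{Q}(\zeta_3)$ the periodic points $1$ and $\zeta_3$ satisfy $\delta_\pf(1,\zeta_3)>0$ at the prime above~$3$, a prime of good reduction). So after sending $(P_0,R,Q)\mapsto(0,\infty,1)$ you may justify $P-1\in R_S^*$ for periodic $P\neq P_0$, but not $P\in R_S^*$. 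Moreover, your ``elimination step'' producing a degree $d-1$ form from $\phi(1)$ is never actually carried out: you do not say what the binary form is, nor why a periodic $P$ should make it an $S$-unit.

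The paper's route is different. It splits on $|\Tail(\phi,K)|$: if this is $\geq 3$ then Theorem~\ref{th 3 periodic}(a) already gives $|\Per(\phi,K)|\le 2^{16|S|}+3$. Otherwise one has one or two $K$-rational tail points; after enlarging $S$ by at most two primes (Propositions~\ref{normalized map} and~\ref{normalized point}) to normalize $\phi$ and a tail point, Corollary~\ref{cor. S-unit} shows that every $K$-rational periodic point except one admits a normalized form with respect to the enlarged set; that single exception accounts for the ``$+1$''. The degree-$(d-1)$ Thue--Mahler form then comes from Proposition~\ref{prop to use T-M2}, and not from $\phi$ directly: one picks a \emph{non}-$K$-rational tail point $P_*=[a:b]\in\PP^1(\bar K)$ with $\phi(P_*)$ a $K$-rational periodic point, sets $E=K(P_*)$, and uses the norm form $F(X,Y)=N_{E/K}(aY-bX)$, irreducible over $K$ of degree $[E:K]\le d-1$ (the periodic preimage of $\phi(P_*)$ is $K$-rational, hence already split off). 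Corollary~\ref{cor. S-unit} over $E$ gives $F(x_P,y_P)\in R_{S'}^*$ for each normalized periodic $P$, and Evertse's bound (Theorem~\ref{Thue-Mahler}) yields the first term of the maximum. The second term handles the degenerate case where every such $P_*$ has $[K(P_*):K]\le 2$: one passes to an extension of degree at most $8$ containing three tail points and applies Theorem~\ref{th 3 periodic}(a) over that extension.
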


While the work described in this paper was being carried out, Canci and Vishkautsan \cite{CS2016} proved a bound for $|\Per(\phi,K)|$, just assuming that $\phi$ has  good reduction outside $S$. Their bound on $|\Per(\phi,K)|$ is roughly of the order of $d2^{16|S|}+2^{2187|S|}$ where  $d \geq 2$ is the degree of $\phi$.

%( i.e. good reduction at each $\pf \notin S$)

Let's recall previous bounds for $|\PrePer(\phi,K)|$ which are relevant for our work. In 2007, Canci \cite{Canci2007} proved for rational functions with good reduction outside $S$  that the length of finite orbits is bounded by:
\begin{equation} \label{eq1}
\left [e^{10^{12}}(|S|+1)^8(\log(5(|S|+1)))^8 \right ]^{|S|}.
\end{equation}
Note that this bound depends only on the cardinality of $S$.

%The bound (\ref{eq1}) is proven using a previously known bound on the periodic part of the orbit proven by  Morton and Silverman \cite{MPS1994}  and some $S$-unit Theorem argument  (\cite{BS1996},\cite{ES2002}) to bound the tail of the orbit.

In Canci's recent work (2014) with Paladino \cite{CP2014} a sharper bound for the length of finite orbits was found:
\begin{equation} \label{eq2}
\max\left \{ (2^{16|S|-8}+3)  \left [12|S|\log(5|S|)\right ]^{[K:\mathbb{Q}]},  [12(|S|+2)\log(5|S|+5)]^{4[K:\mathbb{Q}  ] }  \right \} .
\end{equation}

%In their paper, they also used $S$-unit theorem arguments but in a more general context for Global fields. However their work in the special case of a number field $K$ gives a sharper bound than (\ref{eq1}).

In our work we are interested in the number of $K$-rational tail points and $K$-rational periodic points,  $|\Tail(\phi,K)|$ and $|\Per(\phi,K)|$ respectively.

The bounds mentioned in (\ref{eq1}) and (\ref{eq2}) can be used to deduce bounds on $|\PrePer(\phi,K)|$. For instance, if we assume that every finite orbit has cardinality given by (\ref{eq1}) and using that every point could have at most $d$ preimages under $\phi$ we obtain a bound for $|\PrePer(\phi,K)|$ that is roughly of the order of $d^{|S|^{8|S|}\log|S|}$ where  $d \geq 2$ is the degree of $\phi$. Similarly, the bound deduced from (\ref{eq2}) is roughly of the order of $d^{2^{16|S|}(|S|\log(|S|)^{[K:\mathbb{Q}]}}$, where  $d \geq 2$ is  the degree of $\phi$. These bounds are  polynomial in the degree of $\phi$, however they will be rather large in terms of $|S|$.

In 2007, Benedetto \cite{Benedetto2007} proved for the case of polynomial maps of degree $d \geq 2$ that $|\PrePer(\phi,K)|$ is bounded by $O(|S| \log|S|)$ , where $S$ is the set of places of $K$ at which $\phi$ has bad reduction, including all archimedean places of $K$. The big-$O$ is essentially $\frac{d^2-2d+2}{\log d}$ for large $|S|$. Many other results have been proven in recent years \cite{BCHKW2014}, �\cite{Canci2010} , \cite{MN2006}, \cite{Poonen1998}.

We end this introduction with a brief outline of the rest of the paper.  Section~\ref{prelim} introduces some classical notation and definitions from arithmetic dynamics along with some propositions needed for the main theorems of the paper. In particular, Corollary \ref{cor. S-unit} will play a crucial role in almost every proof. The corollary states that the $\pf$-adic logarithmic distance between a $K$-rational tail point and a $K$-rational periodic point is 0 up to a few exceptions.

Section~\ref{main} presents the proof for Theorem \ref{th 3 periodic} and Corollary \ref{theorem 3} using Corollary \ref{cor. S-unit} together with the $S$-unit theorem. % The main idea is to use the $\pf$-adic logarithmic distance in order to get a linear system of equations. Then we solve that linear system to get a $S$-unit equation. Finally we applied the $S$-unit Theorem to get a bound on $|\Tail(\phi,K)|$ and $|\Per(\phi,K)|$ under the hypotheses of \Cref{th 3 periodic}.

Section~\ref{main2} presents an improvement in the bound found in Corollary \ref{theorem 3} for $|\Tail(\phi,K)|$. This new bound is polynomial in the degree of $\phi$ and exponential in  the cardinality of $S$. The main idea for obtaining this new bound is to substitute the arguments involving $S$-unit equations with arguments involving Thue-Mahler equations. This appears to give the best known general bound for the cardinality of $|\Tail(\phi,K)|$. In this section we also provide a new bound for $|\Per(\phi,K)|$ with a small hypothesis on $\phi$.

Finally, Section~\ref{Eg} presents two examples related to our results. Specifically, we give an example to show that a bound of $|\Tail(\phi,K)|$ must depend on the degree of $\phi$ when $\phi$ has three or fewer $K$-rational periodic points. Similarly, the bound of $|\Per(\phi,K)|$ must depend on the degree of $\phi$ when $\phi$ has two or fewer $K$-rational tail points.

\section*{Acknowledgement} 
The author would like to thank his adviser Dr. Aaron Levin for all his help. The author would also like to thank Jung Kyu Canci, Casey Machen, Charlotte Ure and Solomon Vishkautsan for reviewing previous versions.

%%%%%%%%%%% preliminares    %%%%%%%%%%%
%%%%%%%%%%% preliminares    %%%%%%%%%%%
%%%%%%%%%%% preliminares    %%%%%%%%%%%
\section{Preliminaries} \label{prelim}

%%%% NOTATION BASIC DEFINITION
\subsection{Notation and definitions}

\begin{notation}
In the present article we will use the following notation:

$K$ a number field;

$\bar{K}$ an algebraic closure of $K$;

$R$ the ring of integers of $K$;

$\pf$ a non-zero prime ideal of $R$;

$v_{\pf}$ the $\pf$-adic valuation on $K$ corresponding to the prime ideal $\pf$ (we always assume

$v_{\pf}$ to be normalized so that  $v_{\pf}(K^*)=\mathbb{Z}$);

If the context is clear, we will also use $v_{\pf}(I)$ for the $\pf$-adic valuation of a fractional ideal $I$ of $K$;

$S$ a fixed finite set of places of $K$ including all archimedean
places;

$|S|=s$ the cardinality of $S$;

$R_S = \{x \in K : v _\pf (x) \geq 0 \mbox{    for every prime ideal     } \pf \notin S\}$ the ring of $S$-integers;

$R^{*}_S = \{x \in K : v _\pf (x) = 0 \mbox{    for every prime ideal     } \pf \notin S\}$ the group of $S$-units;

%$\phi $ a rational function on $\PP^1(K)$ of degree $d \geq 2$ with good reduction outside $S$;

$\Per(\phi,K)$ the set of $K$-rational periodic points;

$\Tail(\phi,K)$ the set of $K$-rational tail points;

$\PrePer(\phi,K)$ the set of $K$-rational preperiodic points.
\end{notation}

We begin by recalling the definition of the $\pf$-adic logarithmic distance between two points in $\PP^1$.

\begin{definition}
Let $P_1=[x_1:y_1]$ and $P_2=[x_2:y_2]$ be points in $\mathbb{P}^1(K)$. We will denote by

$$\delta_\pf(P_1,P_2)=v_{\pf}(x_1y_2-x_2y_1)-\min\{v_{\pf}(x_1),v_{\pf}(y_1) \} -\min\{v_{\pf}(x_2),v_{\pf}(y_2) \} $$
the $\pf$-adic logarithmic distance between the points $P_1$ and $P_2$.
\end{definition}

Note that $\delta_\pf (P_1,P_2)$ is independent of the choice of homogeneous coordinates. We use the convention that $v_{\pf}(0)=\infty$. Properties of the $\pf$-adic logarithmic distance can be found in  \cite{MPS1995} and \cite{Silverman2007}. The following definition introduces the idea of normalized forms with respect to $\pf$.

\begin{definition}
\begin{enumerate}

\item We say that $P =[x:y] \in\PP^1(K)$ is in normalized form with respect to $\pf$ if
$$\min\{ v_\pf(x), v_\pf(y)\} = 0 .$$

\item Let $\phi$ be an endomorphism of $\PP^1$, defined over $K$. Assume $\phi$ is given by
$$ \phi = [F(X,Y):G(X,Y)]$$
where $F,G \in K[X,Y]$ are homogeneous polynomials with no common factors. We say that the pair $(F,G)$ is normalized with respect to $\pf$ or that $\phi$ is in normalized form with respect to $\pf$ if $F,G\in R_\pf[X,Y]$ and at least one coefficient of $F$ or $G$ is not in the maximal ideal of $R_\pf$. Equivalently, $\phi=[F:G]$ is normalized with respect to $\pf$ if
$$F(X,Y)=a_0X^d+a_1X^{d-1}Y+...+a_{d-1}XY^{d-1}+a_dY^d $$
and
$$G(X,Y)=b_0X^d+b_1X^{d-1}Y+...+b_{d-1}XY^{d-1}+b_dY^d $$
satisfy
$$\min\{v_\pf(a_0),..., v_\pf(a_d), v_\pf(b_0), ....., v_\pf(b_d)\} =0.$$

\end{enumerate}
\end{definition}

\begin{remark} \label{Remark2.5}
Note that if $P=[x_1:x_2]$ and $Q=[y_1:y_2]$ are in normalized form with respect to $\pf$ then $\delta_\pf(P_1,P_2)=v_{\pf}(x_1y_2-x_2y_1).$
\end{remark}

Since $R_\pf$ is a discrete valuation ring, we can always find a representation of $P$ and $\phi$ in normalized form with respect to $\pf$. However, it is not always true that the same representation is normalized for every $\pf$. For this reason we need a more global definition of normalized forms.

\begin{definition}
\begin{enumerate}

\item We say that $P =[x:y] \in\PP^1(K)$ is normalized with respect to $S$ if $[x:y]$ is normalized with respect to $\pf$ for every $\pf \notin S$.

\item Let $\phi=[F:G]$ be an endomorphism of $\PP^1$, defined over $K$. We say that $\phi$ is normalized with respect to $S$ if $[F:G]$ is normalized with respect to $\pf$ for every $\pf \notin S$.
\end{enumerate}
\end{definition}

\begin{remark} \label{normalize=principal}
Notice that a point $P =[x:y] \in\PP^1(K)$ admits a normalized form with respect to $S$ if and only if the $R_S$-fractional ideal $(x,y)$ is principal.

\end{remark}

Since the concept of good reduction is present through the entire paper, we will recall the definition.

\begin{definition}
Let $\phi $ be an endomorphism of $\PP^1$, defined over $K$ and write $\phi=[F:G]$ in normalized form with respect to $\pf$. We say that $\phi$ has good reduction at $\pf$ if $\tilde{F}(X,Y)=\tilde{G}(X,Y)=0$ has no solutions in $\PP^1(\bar{k})$, where $\tilde{F}$ and $\tilde{G}$ are the reductions of $F$ and $G$ modulo $\pf$ respectively and $k$ is the residue field of $R_\pf$.

We say that $\phi$ has good reduction outside $S$ if $\phi$ has good reduction at $\pf$ for every $\pf \notin S$.
\end{definition}

We also recall two facts on the relation between good reduction and normalized form.

\begin{remark} \label{Silverman Theorem 2.18a}[\cite{Silverman2007}, p.59.]
Let $\phi $ be an endomorphism of $\PP^1$, defined over $K$ and write $\phi=[F:G]$ in normalized form with respect to $\pf$.  If $\phi$ has good reduction at $\pf$ then $\phi^n$ has good reduction at $\pf$ for every $n \geq 2$. Even more, $\phi^n=[F_n: G_n]$ is in normalized form with respect to $\pf$, where $F_n(X,Y)=F(F_{n-1}(X,Y),G_{n-1}(X,Y))$ , $G_n(X,Y)=G(F_{n-1}(X,Y),G_{n-1}(X,Y))$ , $F_1(X,Y)=F(X,Y)$,  and $G_1(X,Y)=G(X,Y)$.
\end{remark}

\begin{remark} \label{Silverman Theorem 2.18b} [\cite{Silverman2007}, p.59.]
Let $\phi $ be an endomorphism of $\PP^1$, defined over $K$ and write $\phi=[F:G]$ in normalized form with respect to $\pf$.  Let $P=[a:b] \in  \PP^1(K)$ be in normalized form with respect to $\pf$. If $\phi$ has good reduction at $\pf$, then $[F(a,b): G(a,b)]$ is in normalized form with respect to $\pf$.
\end{remark}

Finally we give a more explicit definition of a $K$-rational tail point.

\begin{definition}
Given a periodic point $P \in \PP^1(K)$, we say that a point $Q\in \PP^1(K)$ is in the tail of $P$ if it is preperiodic but not a periodic point  and $P$ is in the orbit of $Q$.

We say that $Q$ is a tail point if it is in the tail of some periodic point.
 \end{definition}

%%%% SUBSECTION FACTS
\subsection{Results from diophantine geometry and arithmetic dynamics}
\hspace{1mm}
\vspace{3mm}

Bounding the number of solutions of important equations has always been a fascinating problem. In particular, one can study this problem when the solutions come from the group of $S$-units of a number field $K$.

We can consider the $S$-unit equation $ax+by=1$ where $a,b \in K^{*}$ and $x,y$ are $S$-units. Bounds on the number of solutions of this equation give powerful consequences in different areas of mathematics.  Among many studies on the $S$-unit equation, one of the best bounds is the following:

\begin{theorem} [Beukers and Schlickewei  \cite{BS1996}]
Let $\Gamma$ be a subgroup of $(K^{*})^2=K^{*}\times K^{*}$ of rank $r$. Then the equation
$$x+y=1  \quad\quad \mbox{in  }\quad (x,y) \in \Gamma$$
has at most $2^{8(r+1)}$ solutions.
\end{theorem}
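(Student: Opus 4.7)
The plan is to prove this via the hypergeometric / Pad\'e approximation method pioneered by Evertse and sharpened by Beukers and Schlickewei. First I would reduce to a finitely generated situation: the group $\Gamma$ has rank $r$, so I can choose generators $g_1, \dots, g_r$ and a torsion group, and pass to a finite extension $L/K$ large enough to contain all needed $N$-th roots and archimedean embeddings while enlarging $S$ so that $\Gamma \subset (R_{S,L}^*)^2$. Renormalizing the places of $L$ so the product formula holds with unit total weight, every element of $\Gamma$ is encoded by its vector of logarithmic valuations, giving an embedding into an $\mathbb{R}$-vector space whose image has rank $r$.

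Second, I would partition the solution set according to archimedean behavior. At each archimedean place $v$, a solution $(x,y) = (x,1-x)$ must satisfy either $|x|_v \leq 1/2$ or $|y|_v \leq 1/2$ (up to a small buffer around $|x|_v = 1/2$, which requires separate treatment). Combined with the rank-$r$ constraint, this sign data partitions the solutions into at most $2^{r+1}$ classes, and the problem reduces to bounding the number of solutions in each class by $2^7$.

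Third, within each class I would invoke a hypergeometric gap principle. For a suitable integer $N$, construct Pad\'e approximants $P_N(z), Q_N(z) \in \mathbb{Z}[z]$ to $(1-z)^{1/N}$, with explicit control on both the degrees and the analytic size of the remainder $E_N(z) = P_N(z)(1-z)^{1/N} - Q_N(z)$ near $z=0$. For two hypothetical solutions $x_1, x_2$ in the same class, evaluating the identity at $z = x_1$ and $z = x_2$ and combining with the integrality coming from $y_i = 1 - x_i \in \Gamma$ yields a nonzero $S$-integer whose archimedean and non-archimedean absolute values can be estimated from both sides; the inequality produces a quantitative gap $|x_1 - x_2|_v$ unless $x_1 = x_2$. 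A parallel $p$-adic version of the Pad\'e construction handles the finite places in $S$, and the combined gap forces each class to be geometrically sparse enough to contain at most $2^7$ solutions.

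The main obstacle is numerical sharpness: obtaining the clean constant $2^{8(r+1)}$ rather than a larger exponent requires carefully tuning $N$, exploiting arithmetic cancellation in the binomial coefficients entering the Pad\'e construction, and uniformly matching the archimedean and $p$-adic estimates. Secondary difficulties include handling torsion in $\Gamma$ and descending from the enlarged field $L$ back to $K$ without inflating the count, together with treating the boundary classes where $|x|_v$ is very close to $1/2$ and the clean sign dichotomy breaks down.
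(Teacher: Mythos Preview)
The paper does not prove this theorem at all: it is quoted as a known result of Beukers and Schlickewei \cite{BS1996} and used as a black box (via its Corollary~\ref{S-unit}) in the proof of Theorem~\ref{th 3 periodic}. There is therefore no ``paper's own proof'' to compare your attempt against. The paper's only engagement with the statement is to apply it with $\Gamma_0 = \sqrt{R_S^{*}}$, a group of rank $s-1$, to bound the number of solutions of $Au+Bv=1$.

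Your sketch is in the spirit of the original Beukers--Schlickewei argument, which does rely on hypergeometric Pad\'e approximants to binomial functions and a partition of solutions according to valuation data. However, several features of your outline do not match that proof and would not go through as written. The decomposition is not into $2^{r+1}$ sign classes at archimedean places followed by a bound of $2^7$ per class; rather, one works directly with the rank-$r$ group and uses a gap principle coming from simultaneous Pad\'e approximation to several binomial functions, the key nonvanishing being supplied by a Wronskian-type determinant. The enlargement to a field $L$ containing $N$-th roots and the subsequent descent are not part of the argument: Beukers and Schlickewei work intrinsically with the group $\Gamma$ of rank $r$ without embedding it in an $S$-unit group, which is precisely what allows the bound to depend only on $r$ and not on $|S|$ or $[K:\mathbb{Q}]$. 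If you genuinely want to reconstruct this result, you should consult the original paper directly; for the purposes of the present work, the theorem is simply cited.
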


\begin{corollary} \label{S-unit}
Let $\Gamma_0$ be a subgroup of $K^{*}$ of rank $r$. Consider $\Gamma=\Gamma_0 \times \Gamma_0$ and assume $a,b \in K^{*}$. Then the equation
$$ax+by=1  \quad\quad \mbox{in  }\quad (x,y) \in \Gamma$$
has at most $2^{8(2r+2)}$ solutions.
\end{corollary}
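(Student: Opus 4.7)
My plan is to reduce the equation $ax+by=1$ to the shape $u+v=1$ handled by the Beukers--Schlickewei theorem, at the cost of enlarging the ambient group by only one generator.

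First I would substitute $u=ax$ and $v=by$. Since $a,b \in K^{*}$, this is a bijection between solutions $(x,y) \in \Gamma_0 \times \Gamma_0$ of $ax+by=1$ and pairs $(u,v) \in (a,b)\cdot(\Gamma_0 \times \Gamma_0)$ satisfying $u+v=1$. So the question becomes: how many solutions of $u+v=1$ lie in the coset $(a,b)\cdot\Gamma$ of $\Gamma = \Gamma_0 \times \Gamma_0$ inside $(K^{*})^2$?

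Next I would pass from the coset back to a subgroup. Let $\Gamma^{\star} = \langle \Gamma,\, (a,b)\rangle \subseteq (K^{*})^2$. Because $\Gamma$ has rank $2r$ (being the product of two rank-$r$ groups) and $\Gamma^{\star}$ is obtained from $\Gamma$ by adjoining the single element $(a,b)$, we have $\operatorname{rank}(\Gamma^{\star}) \le 2r+1$. Every pair $(u,v)$ produced above lies in $\Gamma^{\star}$ and satisfies $u+v=1$, so applying the Beukers--Schlickewei theorem to $\Gamma^{\star}$ yields
\[
\#\{(u,v)\in\Gamma^{\star}: u+v=1\} \le 2^{8((2r+1)+1)} = 2^{8(2r+2)},
\]
which is exactly the asserted bound.

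The only delicate point is the rank bookkeeping. The more naive reduction---enlarge $\Gamma_0$ to $\langle \Gamma_0, a, b\rangle$ (rank $\le r+2$) and take the square (rank $\le 2r+4$)---would give, via Beukers--Schlickewei, only the weaker bound $2^{8(2r+5)}$. The gain comes from observing that $(ax,by)$ lies in a single coset of $\Gamma$, so adjoining the one element $(a,b)$ (rather than $a$ and $b$ separately in each coordinate) suffices. Apart from that observation, the proof is formal.
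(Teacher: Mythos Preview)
Your argument is correct and is exactly the natural deduction the paper has in mind: the corollary is stated without proof, and the intended reduction is precisely to absorb $(a,b)$ into the group, observe that the resulting subgroup of $(K^{*})^2$ has rank at most $2r+1$, and apply the Beukers--Schlickewei bound $2^{8(r+1)}$ with $r$ replaced by $2r+1$.
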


We will recall similar results on the closely related Thue-Mahler equation.

Let $F(X,Y)$ be a binary  form of degree $r \geq 3$ with coefficients in $R_S$. An $R_S^{*}$-coset of solutions of
\begin{equation} \label{T-M}
 F(x,y) \in R_S^{*}  \quad\quad \mbox{ in } \quad (x,y) \in R_S^{2}
\end{equation}
is a set $\{\epsilon(x,y): \epsilon \in R_S^{*} \}$, where $(x,y)$ is a fixed solution of (\ref{T-M}).

\begin{theorem}[Evertse \cite{Evertse1997}]    \label{Thue-Mahler}
Let $F(X,Y)$ be a binary  form of degree $r \geq 3$ with coefficients in $R_S$ which is irreducible over $K$. Then the set of solutions of
$$ F(x,y) \in R_S^{*}  \quad\quad \mbox{ in } \quad (x,y) \in R_S^{2} $$
is the union of at most
$$ (5*10^6r)^s$$
$R_S^{*}$-cosets of solutions.
\end{theorem}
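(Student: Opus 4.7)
The plan is to reduce the Thue--Mahler condition $F(x,y) \in R_S^*$ to a family of $S$-unit equations via Siegel's identity, and then invoke the Beukers--Schlickewei bound cited immediately above.

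First I would pass to a splitting field $L/K$ of $F$ and let $T$ denote the set of places of $L$ lying over $S$ together with those dividing the discriminant of $F$. Writing $F(X,Y) = a_0 \prod_{i=1}^{r}(X - \alpha_i Y)$ with $\alpha_i \in L$ pairwise distinct, each solution $(x,y)$ produces linear forms $\ell_i := x - \alpha_i y$ whose product is an $R_T$-unit (up to the bounded constant $a_0$). Using that $\ell_i - \ell_j = (\alpha_j - \alpha_i)y$ and $\alpha_j \ell_i - \alpha_i \ell_j = (\alpha_j - \alpha_i) x$, one sees that the fractional ideal of each $\ell_i$ is supported on $T$, so each $\ell_i$ lies in one of finitely many cosets of $R_T^*$ in $L^*$; the number of cosets is controlled by the $T$-class number and by how many prime ideals of $R_T$ can appear with bounded multiplicity in the discriminant.

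Next I would apply Siegel's identity: for any three distinct indices $i,j,k$,
$$(\alpha_j - \alpha_k)\,\ell_i + (\alpha_k - \alpha_i)\,\ell_j + (\alpha_i - \alpha_j)\,\ell_k = 0.$$
Dividing through by $(\alpha_i - \alpha_j)\,\ell_k$ produces an equation $u+v = 1$ with $(u,v)$ in a fixed finitely generated subgroup $\Gamma \subset (L^*)^2$ assembled from $R_T^*$ and the finitely many cosets fixed above. Beukers--Schlickewei bounds the number of such $(u,v)$, and running over triples $(i,j,k)$ reconstructs each tuple $(\ell_1,\dots,\ell_r)$ up to a global $R_T^*$-scalar, which in turn determines $(x,y)$ up to an element of $R_S^*$ by inverting a linear system in any two of the $\ell_i$'s.

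The hard part is obtaining the clean shape $(5 \cdot 10^6 r)^s$ with the exponent depending only on $s = |S|$ (and not on $[L:K]$ or $[K:\mathbb{Q}]$). A naive use of Beukers--Schlickewei with $\Gamma = R_T^* \times R_T^*$ gives a bound exponential in $|T|$, which can be of size $r!\cdot s$ and is therefore far worse than claimed. Evertse's refinement replaces this straightforward counting by a Galois-equivariant argument that exploits the symmetry of the linear forms $\ell_i$ under $\mathrm{Gal}(L/K)$, together with a specialization trick that suppresses the extension degree and produces a dependence on $r$ through the polynomial factor only. Making this step quantitative, so that the $[L:K]$ contribution collapses into the multiplicative constant $r$ rather than appearing in the exponent, is where the real technical work lies, and it is the reason Beukers--Schlickewei alone cannot be invoked as a black box.
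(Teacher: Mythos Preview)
The paper does not prove this theorem. It is stated in the preliminaries as a result of Evertse \cite{Evertse1997} and is used as a black box; no argument is given or sketched. So there is no ``paper's own proof'' to compare your proposal against.

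As for the proposal itself: the outline you give (pass to the splitting field, use Siegel's identity to produce $S$-unit equations, apply Beukers--Schlickewei) is indeed the classical skeleton behind results of this type, and you correctly identify the crux: a direct application of Beukers--Schlickewei over the splitting field gives an exponent of order $|T|$, which can be as large as $r!\cdot s$, not $s$. You then defer the essential step---collapsing the dependence on $[L:K]$ into the base rather than the exponent---to an unspecified ``Galois-equivariant argument'' and ``specialization trick'' of Evertse. That is precisely the content of the theorem, so what you have written is an informed summary of where the difficulty lies rather than a proof. In particular, your proposal does not establish the stated bound $(5\cdot 10^6 r)^s$; it establishes only a bound of the much weaker shape $C^{r!\cdot s}$, and you say so yourself in the final paragraph. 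If the intent was to reproduce Evertse's argument, the missing ingredient is exactly the quantitative descent that keeps the exponent equal to $s$; without it there is a genuine gap between what you prove and what is claimed.
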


Next we will give the definition and some results on the $n^{th}$ dynatomic polynomial associated to a rational function $\phi$.

\begin{definition}
Let $\phi(z)\in K(z)$ be a rational function of degree $d$. For any $n \geq 0$ write
$$\phi^n(X,Y)=[F_n(X,Y):G_n(X,Y)]  $$
with homogeneous polynomials $F_n,G_n \in K[X,Y]$ of degree $d^n$. The $n$-period polynomial of $\phi$ is the polynomial
$$\Phi_{\phi,n}(X,Y)=YF_n(X,Y)-XG_n(X,Y).  $$
$\Phi_{\phi,n}$ is well defined up to a constant.
Notice that $\Phi_{\phi,n}(P)=0$ if and only if $\phi^n(P)=P$.

The $n^{th}$ dynatomic polynomial of $\phi$ is the polynomial
\begin{equation*}
\Phi^{*}_{\phi,n}(X,Y)=\displaystyle\prod_{k|n}(YF_k(X,Y)-XG_k(X,Y))^{\mu(n/k)}= \displaystyle\prod_{k|n} \Phi_{\phi,k}(X,Y)^{\mu(n/k)}
\end{equation*}

where $\mu$ is the M{\"o}bius function. If $\phi$ is fixed, we write $\Phi_n$ and $\Phi_n^{*}$ for $\Phi_{\phi,n}$ and $\Phi^{*}_{\phi,n}$.
\end{definition}

The following remark will give us the degree of the dynatomic polynomial which will be useful in the end of the next section.

\begin{remark} \label{degree dynatomic}
The degree of the $n^{th}$ dynatomic polynomial is given by 

\begin{equation*}
\deg (\Phi^{*}_{\phi,n}) = \displaystyle\sum_{k|n} \mu\left(\frac{n}{k}\right) (d^k+1)
\end{equation*}

In particular, if $n=1$ the degree of $\Phi^{*}_{\phi,n}$ is $d+1$ and if $n$ is a prime number then the degree of $\Phi^{*}_{\phi,n}$ is $d^n-d$.

\end{remark}

\begin{definition}
Let $\phi(z) \in K(z)$ be a rational function and $P \in \PP^1(K)$. We say that $P$ has formal period $n$ if $\Phi^{*}_{\phi,n}(P)=0$.
\end{definition}

\begin{definition}
Let $\phi(z) \in K(z)$ be a rational function of degree $d \geq 2$  and $P \in \PP^1(K)$. We say that $P$ has primitive period $n$ if $\phi^n(P)=P$ and $\phi^i(P) \neq P$ for all $1 \leq i < n$ .
\end{definition}

\begin{theorem}   [\cite{Silverman2007} , p.151]   \label{dynatomic}
Let $\phi(z) \in K(z)$ be a rational function of degree $d \geq 2$. For each $P \in \PP^1(\bar{K})$, let
$$a_P(n)=\ord_P(\Phi_{\phi,n}(X,Y)) \quad \mbox{and} \quad a_P^{*}(n)=\ord_P(\Phi^{*}_{\phi,n}(X,Y))$$
where $\ord_P(\Phi_{\phi,n}(X,Y))$ and $\ord_P(\Phi^{*}_{\phi,n}(X,Y))$ are the order of zero or pole at $P$ of $\Phi_{\phi,n}(X,Y)$ and $\Phi^{*}_{\phi,n}(X,Y)$, respectively. Then
\begin{enumerate}
\item[(a)] $\Phi^{*}_{\phi,n}\in K[X,Y]$, or equivalently,
$$a_P^{*}(n) \geq 0 \mbox{    for all   } n \geq 1 \mbox{   and all   } P\in \PP^1. $$

\item[(b)] Let $P$ be a point of primitive period $m$ and let $\lambda (P)=(\phi^m)'(P)$ be the multiplier of $P$. Then  $P$ has formal period $n$, i.e., $a_P^{*}(n) > 0$, if and only if one of the following is true:
\begin{enumerate}
\item[(i)] $n=m$

\item[(ii)] $n=mr$ and $\lambda(P)$ is a primitive $r^{th}$ root of unity.

\end{enumerate}
\end{enumerate}
In particular,  $a_P^{*}(n) $ is nonzero for at most two values of $n$.
\end{theorem}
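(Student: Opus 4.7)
The plan is to reduce both parts to a local computation of the order of vanishing of $\phi^n(z)-z$ at each $P \in \PP^1(\bar K)$, using the parabolic iteration formula, and then to invert via Möbius. From the defining product $\Phi^{*}_{\phi,n} = \prod_{k\mid n} \Phi_{\phi,k}^{\mu(n/k)}$, Möbius inversion gives $\Phi_{\phi,n} = \prod_{k\mid n} \Phi^{*}_{\phi,k}$, and taking orders at any $P$ yields
$$a_P(n) = \sum_{k\mid n} a_P^{*}(k), \qquad a_P^{*}(n) = \sum_{k\mid n} \mu(n/k)\,a_P(k).$$
The $\Phi_{\phi,n}$ are honest polynomials, so $a_P(n) \geq 0$, and if $P$ is not periodic these all vanish and $a_P^{*}(n) = 0$. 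So fix $P$ of primitive period $m$; then $a_P(k) = 0$ unless $m\mid k$, and everything reduces to computing $a_P(mj)$ for $j \geq 1$. (Work in an affine chart at $P$; the point at infinity is identical in the reciprocal chart.)

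Set $\psi = \phi^m$ and $\lambda = \psi'(P) = (\phi^m)'(P)$. Since $(\phi^{mj})'(P) = \lambda^j$, if $\lambda^j \neq 1$ then $\phi^{mj}(z)-z$ has a simple zero at $P$ and $a_P(mj) = 1$. When $\lambda^j = 1$ the multiplier $\lambda$ must be a root of unity; letting $r$ be its exact order, the condition $\lambda^j = 1$ is equivalent to $r\mid j$. In this case apply the parabolic iteration formula to $\tilde\psi = \phi^{mr}$: from $\tilde\psi'(P) = 1$ one has $\tilde\psi(z)-z = a(z-P)^e + O((z-P)^{e+1})$ with some $a \neq 0$ and $e \geq 2$, and a short induction in characteristic zero gives
$$\tilde\psi^{j'}(z) - z \;=\; j'\,a\,(z-P)^e + O((z-P)^{e+1}),$$
so that $a_P(mrj') = e$ is independent of $j' \geq 1$. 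This parabolic invariance of the multiplicity under iteration is the technical core of the argument.

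Substituting into the Möbius sum and writing $n = mN$, split on whether $\lambda$ is a root of unity. If it is not, every $a_P(mj)$ equals $1$ and $a_P^{*}(n) = \sum_{j\mid N} \mu(N/j)$, which equals $1$ when $N=1$ and $0$ otherwise. If $\lambda$ is a primitive $r$-th root of unity, then $a_P(mj) = 1$ for $r\nmid j$ and $a_P(mj) = e$ for $r\mid j$, giving
$$a_P^{*}(n) \;=\; \sum_{j\mid N} \mu(N/j) \;+\; (e-1)\sum_{\substack{j\mid N \\ r\mid j}} \mu(N/j).$$
The first sum is $1$ if $N=1$ and $0$ otherwise; the second, after the substitution $j = rj'$, is itself a Möbius sum over divisors of $N/r$ (vacuous unless $r\mid N$), equal to $1$ if $N=r$ and $0$ otherwise. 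Both contributions are nonnegative, giving (a) at every $P$ and hence $\Phi^{*}_{\phi,n} \in K[X,Y]$ (rationality over $K$ being clear from the definition). The total is strictly positive exactly when $N=1$ or $N=r$, i.e.\ when $n=m$ or $n=mr$, proving (b) and bounding the set of such $n$ by $\{m,mr\}$, which has at most two elements.
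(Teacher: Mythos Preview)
The paper does not prove this theorem; it is quoted from \cite{Silverman2007}, p.~151, without argument. Your proof is correct and is essentially the standard one given there: compute the local multiplicity $a_P(mj)$ from the multiplier (using the parabolic invariance of the multiplicity under iteration when the multiplier is a root of unity, valid in characteristic zero), and then Möbius-invert.
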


We will recall a result on the existence of $n$-periodic points for rational functions due to Baker.

\begin{theorem} [Baker \cite{Baker1964}] \label{Baker}
Let $\phi(z) \in K(z)$ be a rational function of degree $d \geq 2$ defined over $K$. Suppose that $\phi$ has no primitive $n$-periodic points. Then $(n,d)$ is one of the pairs
$$(2,2),(2,3),(3,2),(4,2). $$
If $\phi$ is a polynomial, then only $(2,2)$ is possible.
\end{theorem}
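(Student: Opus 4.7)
The plan is to exploit the $n$-th dynatomic polynomial $\Phi^*_{\phi,n}$ and the classification of its roots given in Theorem \ref{dynatomic}. By Remark \ref{degree dynatomic},
$$ \deg \Phi^*_{\phi,n}=\sum_{k\mid n}\mu(n/k)(d^k+1), $$
which is strictly positive for $d\ge 2$, so $\Phi^*_{\phi,n}$ has zeros in $\PP^1(\bar K)$. Assume $\phi$ has no primitive $n$-periodic point. Then by Theorem \ref{dynatomic}(b), every root $P$ of $\Phi^*_{\phi,n}$ must fall in case (ii): its primitive period $m$ is a proper divisor of $n$, and its multiplier $\lambda(P)=(\phi^m)'(P)$ is a primitive $r$-th root of unity with $r=n/m$. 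In particular, $P$ is a parabolic fixed point of the iterate $\phi^n$. The strategy is to bound the total multiplicity $\sum_{P} a_P^*(n)=\deg\Phi^*_{\phi,n}$ arising from these parabolic points and compare it with the degree formula.

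The first key step is a local multiplicity bound: for $P$ primitive $m$-periodic with $\lambda(P)$ a primitive $r$-th root of unity and $n=mr$, I would show $a_P^*(n)\le r$. Writing $\phi^m$ in a local uniformizer at $P$ as $w\mapsto \zeta_r w+O(w^2)$, the iterate $\phi^n=(\phi^m)^r$ takes the form $w\mapsto w+O(w^2)$; a local computation of the leading nonzero term together with the M\"obius inversion $a_P(n)=\sum_{k\mid n}a_P^*(k)$ should yield the bound (in the generic non-degenerate parabolic case it is an equality, and degenerate cases are absorbed by counting per cycle). Since the number of primitive $m$-periodic points is at most $\deg\Phi^*_{\phi,m}\le d^m+1$, summing over proper divisors yields
$$ \sum_{k\mid n}\mu(n/k)(d^k+1)\;\le\;\sum_{\substack{m\mid n\\ m<n}}\frac{n}{m}(d^m+1). $$

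The second step is a case analysis of this inequality. For $n$ prime it reduces to $d^n-d\le n(d+1)$, which fails except for $(n,d)\in\{(2,2),(2,3),(3,2)\}$. For $n=4$ it becomes $(d^4+1)-(d^2+1)\le 4(d+1)+2(d^2+1)$, surviving only for $d=2$. For $n\ge 5$ composite, comparing the dominant terms $d^n$ on the left with the largest $d^m$ ($m$ a maximal proper divisor of $n$) on the right quickly gives a contradiction. For the polynomial case, the point $\infty$ is a totally ramified fixed point with multiplier $0$, hence can never be a parabolic periodic point for any iterate $\phi^m$. Removing $\infty$ from each count on the right-hand side sharpens the inequality enough to rule out $(2,3)$ and $(3,2)$, leaving only $(2,2)$ as realizable (e.g.\ by $\phi(z)=-z+\text{const}$-type examples).

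The main obstacle is the local multiplicity bound $a_P^*(n)\le n/m$: it requires a careful local analysis of how parabolic structure propagates through the dynatomic factorization $\Phi_n=\prod_{k\mid n}(\Phi_k^*)^{\mu(n/k)}$, and it is the step where all the arithmetic-geometric content of Baker's theorem is concentrated. Once this is in hand, the remainder of the proof is elementary degree counting and a small-parameter case check.
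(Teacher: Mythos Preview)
This theorem is not proved in the paper; it is quoted from Baker (1964) and used as a black box in the proofs of Corollary~\ref{theorem 3} and Proposition~\ref{prop to use T-M}. So there is no proof in the paper to compare against.

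Your framework---accounting for $\deg\Phi^*_{\phi,n}$ via the classification of its roots in Theorem~\ref{dynatomic}---is the right starting point, but the key local bound you propose, $a_P^*(n)\le r=n/m$, is false. The very local computation you sketch shows that if $P$ has primitive period $m$ and multiplier a primitive $r$-th root of unity, then in a local coordinate $\phi^{mr}(w)=w+cw^{\,qr+1}+\cdots$ for some integer $q\ge 1$, and M\"obius inversion gives $a_P^*(n)=qr$. Thus $a_P^*(n)\ge r$, with strict inequality whenever $q\ge 2$. Concretely, take $\phi(z)=-z+z^5$ and $P=0$: here $m=1$, $r=2$, and $\phi^2(z)=z-2z^5+\cdots$, so $a_0^*(2)=4>2$. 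Your displayed inequality $\deg\Phi^*_{\phi,n}\le\sum_{m\mid n,\,m<n}(n/m)(d^m+1)$ therefore cannot be obtained from any pointwise local estimate; the ``degenerate cases'' are not absorbed by cycle-counting, because nothing in the local picture prevents $q$ from being large.

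What Baker's argument actually uses is a \emph{global} constraint on $\sum_P q_P$, not on the individual $a_P^*(n)$. Working over $\mathbb{C}$ (to which one reduces via an embedding $\bar K\hookrightarrow\mathbb{C}$), Fatou's theorem guarantees that each cycle of attracting petals at a parabolic cycle of $\phi$ contains a critical point of $\phi$; since $\phi$ has only $2d-2$ critical points, this bounds the total number of petal cycles, hence $\sum q_P$, from above. Combined with fixed-point index identities and a case analysis, this isolates the exceptional pairs. So the step you flag as ``the main obstacle'' is not merely delicate: as a purely local statement it is false, and it must be replaced by this transcendental global input.
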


\begin{remark} \label{Baker remark}
Kisaka completely classifies all the rational functions associated to the exceptional pairs $(n,d)$ mentioned in Baker's Theorem. Each of these exceptional rational functions has at least two distinct fixed points in $\bar{K}$.
\end{remark}

To end this subsection we will state a strong consequence of Dirichlet's Theorem on  primes in arithmetic progression.
% needed in Section~\ref{main2}.

\begin{theorem}[\cite{Paulo2001}, p.527]  \label{ideal class}
If $I$ is a fractional ideal of $R_{S}$, then there is a prime ideal $P_0$ of $R_{S}$ such that $[I] = [P_0]$ as $R_{S}$-ideal classes i.e. there is a $\lambda \in K$ such that $I=(\lambda)P_0 $.
\end{theorem}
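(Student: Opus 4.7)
The plan is to reduce the statement to the classical fact, a consequence of the Chebotarev density theorem (which generalizes Dirichlet's theorem on primes in arithmetic progressions), that every ideal class of the ring of integers $R$ of $K$ contains infinitely many prime ideals. Once this input is available, the rest is a bookkeeping exercise comparing the ideal theory of $R$ with that of its localization $R_S$.

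First I would observe that every nonzero fractional ideal of $R_S$ arises by extension of scalars from a fractional ideal of $R$. Indeed, the nonzero prime ideals of $R_S$ are exactly the prime ideals of $R$ lying outside $S$, and $R_S$ is itself a Dedekind domain, so the unique factorization of $I$ in $R_S$ lets me write $I = J R_S$ where $J$ is a fractional $R$-ideal supported away from $S$. By the density statement recalled above, the class $[J] \in \mathrm{Cl}(R)$ is represented by infinitely many prime ideals $\mathfrak{q}$ of $R$; in particular, I can choose one such $\mathfrak{q}$ that lies outside the finite set $S$. Writing $J = (\mu)\,\mathfrak{q}$ for some $\mu \in K^{*}$ and putting $P_0 := \mathfrak{q} R_S$, which is a prime ideal of $R_S$ by the choice $\mathfrak{q} \notin S$, I obtain
$$ I = J R_S = (\mu)\,\mathfrak{q} R_S = (\mu)\, P_0, $$
so $\lambda = \mu$ does the job and $[I] = [P_0]$ in $\mathrm{Cl}(R_S)$.

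The main content, and the only genuinely nontrivial step, is the density assertion that every class in $\mathrm{Cl}(R)$ contains infinitely many prime ideals; this is precisely where Dirichlet's theorem (in its number-field generalization) is invoked. Everything else—identifying fractional ideals of $R_S$ with extensions of fractional ideals of $R$ supported off $S$, and using that the finite set $S$ can be avoided because each class contains \emph{infinitely} many primes—is routine, so the hard part is entirely offloaded to that classical input.
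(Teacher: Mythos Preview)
Your argument is correct. The paper does not supply its own proof of this statement: it is quoted as a known result with a citation to the literature, and the surrounding text merely notes that it is ``a strong consequence of Dirichlet's Theorem on primes in arithmetic progression.'' Your sketch---pulling $I$ back to a fractional $R$-ideal $J$ supported off $S$, invoking the density of primes in each ideal class of $R$ to pick $\mathfrak{q}\notin S$ with $[\mathfrak{q}]=[J]$, and then extending to $R_S$---is exactly the standard justification and matches the spirit of the paper's one-line attribution.
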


%%%% SUBSECTION PROPOSITIONS NEEDED IN THE PAPER.
\subsection{Main propositions}
\hspace{1mm}
\vspace{3mm}

%%% The following are the KEY PROPOSITION+COROLLARY of the whole paper
The next proposition is a fundamental ingredient for the entire paper.
\begin{proposition}
Let $\phi$ be an endomorphism of $\PP^1$, defined over $K$. Suppose $\phi$ has good reduction outside $S$. Let $P\in\PP^1(K)$ be a periodic point, $Q\in\PP^1(K)$ a fixed point with $P\neq Q$ and $R\in\PP^1(K)$ a tail point of $Q$. Then $\delta_\pf(P,R)=0$ for every $\pf \notin S$.
\end{proposition}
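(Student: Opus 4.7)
The plan is to argue by contradiction: assume some $\pf\notin S$ has $\delta_\pf(P,R)\geq 1$, and derive $0\equiv 1\pmod{\pf}$ by pinning down the multiplier $\lambda_Q:=\phi'(Q)$ in two incompatible ways. Let $n$ denote the primitive period of $P$ and $m\geq 1$ the exact tail length of $R$, so $\phi^m(R)=Q$ and $\phi^{m-1}(R)\neq Q$. The good-reduction inequality $\delta_\pf(\phi(X),\phi(Y))\geq \delta_\pf(X,Y)$ iterated $i$ times gives $\delta_\pf(\phi^i(P),\phi^i(R))\geq 1$; choosing $i$ to be a multiple of $n$ with $i\geq m$ yields $\phi^i(P)=P$ and $\phi^i(R)=Q$, so $\delta_\pf(P,Q)\geq 1$. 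The standard ultrametric-type inequality then forces $\delta_\pf(R,Q)\geq \min\{\delta_\pf(R,P),\delta_\pf(P,Q)\}\geq 1$, and applying good reduction along the orbit of $R$ gives $\delta_\pf(\phi^j(R),Q)\geq 1$ for all $j\geq 0$. So I have two witnesses reducing to $\tilde Q$ modulo $\pf$: a non-$Q$ preimage $R':=\phi^{m-1}(R)$ of $Q$ under $\phi$, and the non-$Q$ fixed point $P$ of $\phi^n$.

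Next I would change coordinates by a matrix in $\mathrm{GL}_2(R_\pf)$ (available because $Q$ admits a normalized form at $\pf$) to place $Q$ at $[0:1]$; this preserves both good reduction at $\pf$ and all $\pf$-adic distances. Write the conjugated map in normalized form as $\phi=[F:G]$ with $F=\sum_{i=0}^d a_iX^{d-i}Y^i$ and $G=\sum_{i=0}^d b_iX^{d-i}Y^i$. The condition $\phi([0:1])=[0:1]$ forces $a_d=0$, and the good-reduction condition at $(0,1)$ forces $\tilde b_d\neq 0$, so $\lambda_Q=a_{d-1}/b_d$ has a well-defined reduction. Writing $R'=[r_1:r_2]$ in normalized form, the condition $\delta_\pf(R',Q)\geq 1$ combined with normalization yields $v_\pf(r_1)\geq 1$ and $v_\pf(r_2)=0$. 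Then the relation $F(r_1,r_2)=0$ factors as $r_1\bigl(a_0r_1^{d-1}+\cdots+a_{d-1}r_2^{d-1}\bigr)=0$; dividing by $r_1\neq 0$ and reducing mod $\pf$ yields $a_{d-1}\equiv 0\pmod{\pf}$, so $\tilde\lambda_Q=0$.

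The identical framework applied to $\phi^n=[F_n:G_n]$ furnishes the contradictory condition. By Remark~\ref{Silverman Theorem 2.18a}, $\phi^n$ is in normalized form at $\pf$, has good reduction there, and still fixes $[0:1]$, so if $A_i,B_i$ denote the coefficients of $F_n,G_n$ we have $A_{d^n}=0$ and $\tilde B_{d^n}\neq 0$. Writing $P=[p_1:p_2]$ in normalized form with $v_\pf(p_1)\geq 1$ and $v_\pf(p_2)=0$, the equation $p_2F_n(p_1,p_2)-p_1G_n(p_1,p_2)=0$ has every monomial divisible by $p_1$; dividing by $p_1\neq 0$ and reducing mod $\pf$ forces $A_{d^n-1}\equiv B_{d^n}\pmod{\pf}$. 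This says the multiplier of $\phi^n$ at $Q$, equal to $A_{d^n-1}/B_{d^n}$ and by the chain rule also equal to $\lambda_Q^n$, reduces to $1$. Combined with $\tilde\lambda_Q=0$ this gives $0=1$ in the residue field, a contradiction, so $\delta_\pf(P,R)=0$ for every $\pf\notin S$. The main care is in transporting normalized forms and good reduction through the coordinate change and through iteration to $\phi^n$; once these are in place the proof reduces to the two coefficient computations above, and the essential ingredient is that the tail point $R$ contributes an extra preimage of $Q$ reducing to $\tilde Q$ (forcing $\tilde\lambda_Q=0$) while the second periodic point $P$ forces $\tilde\lambda_Q^{\,n}=1$.
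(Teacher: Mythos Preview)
Your argument is correct, but it takes a genuinely different route from the paper's proof. The paper argues directly: with $P$, $Q$, $R$, and $\phi$ in normalized form at $\pf$, it observes that $L_Q\circ\phi^{mn}$ (where $L_Q$ is the linear form defining $Q$) vanishes at both $Q$ and $R$, hence factors over $R_\pf[X,Y]$ as $L_Q\cdot L_R\cdot H$ by Gauss's lemma; evaluating at $P$ and using $v_\pf(L_Q(\phi^{mn}(P)))=v_\pf(L_Q(P))$ (periodicity plus normalized iteration) cancels the $L_Q$ contribution and forces $v_\pf(L_R(P))=0$. Your approach is instead a contradiction argument built on the good-reduction inequality $\delta_\pf(\phi(X),\phi(Y))\geq\delta_\pf(X,Y)$ and the ultrametric inequality for $\delta_\pf$, followed by a computation of the reduced multiplier $\tilde\lambda_Q$ in two incompatible ways: the extra preimage $R'$ of $Q$ reducing to $\tilde Q$ forces $\tilde\lambda_Q=0$, while the extra fixed point $P$ of $\phi^n$ reducing to $\tilde Q$ forces $\tilde\lambda_Q^{\,n}=1$. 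The paper's factorization argument is shorter and avoids coordinate changes and multipliers entirely; your argument is longer but more transparently dynamical, isolating the geometric obstruction (two incompatible constraints on the multiplier at the common reduction) and showing explicitly why both a tail point \emph{and} a second periodic point are needed.
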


\begin{proof}
Let $\pf \notin S$ be a prime of good reduction. Consider $P=[p_1:p_2],Q=[q_1:q_2],R=[r_1:r_2]$ and $\phi=[F(x,y):G(x,y)]$ all in normalized form with respect to $\pf$. Let $n$ be the period of $P$ and $L_Q(x,y)=q_2x-q_1y$ a linear form defining $Q$ .

Given $N>1$ consider  $\phi^{N}=[F_{N}(x,y):G_{N}(x,y)]$ where $F_{N}(x,y)=F_{N-1}(F(x,y),G(x,y)) $ , $G_{N}(x,y)=G_{N-1}(F(x,y),G(x,y))$ , $F_1(x,y)=F(x,y)$ and $G_1(x,y)=G(x,y)$. By Remark \ref{Silverman Theorem 2.18a}, %Since $\phi$ has good reduction at $\pf$ and $\phi$ is in normalized form with respect to $\pf$ then
$(F_{N} , G_{N})$ is in normalized form with respect to $\pf$ and by Remark \ref{Silverman Theorem 2.18b}, $[F_N(p_1,p_2) : G_N(p_1,p_2)]$ is in normalized form with respect to $\pf$ \emph{i.e.} $\min \{ v_\pf(F_{N}(p_1,p_2)) , v_\pf(G_{N}(p_1,p_2))  \}=0$.
%Further, since $P$ is in normalized form with respect to $\pf$ and $\phi$ has good reduction at $\pf$, we have that $\min \{ v_\pf(F_{N}(p_1,p_2)) , v_\pf(G_{N}(p_1,p_2))  \}=0$ \emph{i.e.} the point $[F_N(p_1,p_2) : G_N(p_1,p_2)]$ is in normalized form with respect to $\pf$

%(R_{S})^{*}_\pf
Therefore for every $m>0$ we can find $\lambda \in (R)^{*}_\pf$ such that $F_{nm}(p_1,p_2)= \lambda p_1 $ and $G_{nm}(P)=  \lambda p_2 $.  We conclude
\begin{equation} \label{Eq2.15}
v_\pf(L_Q  (F_{nm}(p_1,p_2),G_{nm}(p_1,p_2)))=v_\pf(L_Q(p_1,p_2))+v_\pf(\lambda)=v_\pf(L_Q(p_1,p_2)) .
\end{equation}
Pick $m$ big enough so that $\phi^{mn}(R)=Q$. Then  $L_Q  (F_{nm}(r_1,r_2),G_{nm}(r_1,r_2) )=0$.

Let $L_R(x,y)=r_2x-r_1y$ be a linear form defining  $R$, and notice that $L_Q(x,y)$ , $L_R(x,y)$  are factors of $L_Q  ( F_{nm}(x,y),G_{nm}(x,y)   )$. By Gauss's lemma, we can find a polynomial $H(x,y) \in (R_{S})_\pf[x,y]$ such that
$$L_Q  ( F_{nm}(x,y),G_{nm}(x,y)   )=L_R(x,y)L_Q(x,y)H(x,y).$$
Hence
$$v_\pf(L_Q  ( F_{nm}(p_1,p_2),G_{nm}(p_1,p_2)   ))=v_\pf(L_R(p_1,p_2))+v_\pf(L_Q(p_1,p_2))+v_\pf(H(p_1,p_2)).$$
 So by (\ref{Eq2.15})
$$0=v_\pf(L_R(p_1,p_2))+v_\pf(H(p_1,p_2)). $$

Since $v_\pf(L_R(p_1,p_2)) \geq 0$ and $v_\pf(H(p_1,p_2)) \geq 0$ we get $v_\pf(L_R(p_1,p_2))=0$. Finally, since $R$ and $P$ are in normalized form with respect to $\pf$, we have $v_\pf(L_R(p_1,p_2))=\delta_\pf(P,R)=0$  by Remark \ref{Remark2.5}.

\end{proof}

\begin{corollary} \label{cor. S-unit}
Let $\phi$ be an endomorphism of $\PP^1$, defined over $K$. Suppose $\phi$ has good reduction outside $S$. Let $R\in\PP^1(K)$ be a tail point and let $n$ be the period of the periodic part of the orbit of $R$. Let $P\in\PP^1(K)$ be any periodic point that is not $\phi^{mn}(R)$ for some $m$. Then $\delta_\pf(P,R)=0$ for every $\pf \notin S$.
\end{corollary}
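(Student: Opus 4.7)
The plan is to deduce the corollary from the preceding proposition by passing to the iterate $\psi := \phi^n$. By Remark \ref{Silverman Theorem 2.18a}, $\psi$ inherits good reduction outside $S$ from $\phi$. Since $n$ is the period of the periodic part of $R$'s orbit under $\phi$, every point on that cycle is a fixed point of $\psi$. Under iteration of $\psi$, the point $R$ therefore moves through its (finite) tail and eventually lands on a unique point $Q$ of the cycle, which is fixed by $\psi$; concretely, $Q=\phi^{mn}(R)$ for all $m$ sufficiently large. Moreover, $R$ cannot be $\psi$-periodic, since otherwise $\phi^{jn}(R)=R$ for some $j$ would make $R$ already $\phi$-periodic. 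Hence $R$ is a $\psi$-tail point of the $\psi$-fixed point $Q$.

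Next I would verify that the hypothesis ``$P\neq \phi^{mn}(R)$ for every $m$'' translates, under the assumption that $P$ is $\phi$-periodic, into ``$P\neq Q$''. Among $\phi$-periodic points, the only one of the form $\phi^{mn}(R)$ is $Q$ itself: once $m$ is large enough that $\phi^{mn}(R)$ lies on the cycle, its value stabilizes, because the step size $n$ equals the cycle length. Thus the hypothesis forces $P\neq Q$, while $P$ remains periodic under $\psi$.

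All of the hypotheses of the preceding proposition are now in place for the triple $(P,Q,R)$ with respect to $\psi$: the map $\psi$ has good reduction outside $S$, $P$ is $\psi$-periodic, $Q$ is a $\psi$-fixed point distinct from $P$, and $R$ is a $\psi$-tail point of $Q$. Applying the proposition yields $\delta_\pf(P,R)=0$ for every $\pf\notin S$, which is the desired conclusion. The only delicate part of the argument is the bookkeeping that pinpoints the entry point $Q$ of the $\psi$-orbit of $R$ into the cycle and verifies the tail-versus-periodic status of $R$ and $P$ under the new map; once that is done the corollary is a direct application of the proposition.
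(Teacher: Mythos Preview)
Your proposal is correct and follows essentially the same approach as the paper: pass to $\psi=\phi^n$, use Remark~\ref{Silverman Theorem 2.18a} to retain good reduction, take $Q=\phi^{mn}(R)$ as the $\psi$-fixed point whose tail contains $R$, and apply the preceding proposition to the triple $(P,Q,R)$. Your additional bookkeeping (verifying that $R$ is genuinely $\psi$-tail and that the only $\phi$-periodic point of the form $\phi^{mn}(R)$ is $Q$) is correct and simply makes explicit what the paper leaves implicit.
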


\begin{proof}
Take the minimum $m>0$ such that $\phi^{mn}(R)$ is a periodic point.  By Remark \ref{Silverman Theorem 2.18a}, $\phi^n$ also has good reduction outside $S$.

Now apply the previous proposition using $\phi^n$ for $\phi$ , $\phi^{mn}(R)$ for the fixed point and $P$ as the periodic point different from $\phi^{mn}(R)$.
\end{proof}

%%%%%%%%%%COMMENT
The last Corollary tells us that $R$ is an $S$-integral point with respect to $P$ (and vice versa). For instance, if $P=[x_1:y_1]$ and $R=[x_2:y_2]$ are written with coprime $S$-integral coordinates, then $x_1y_2-x_2y_1$ is an $S$-unit. Thus, with enough periodic points $P$ (or tail points $R$) we obtain an $S$-unit equation, \emph{i.e.} an equation of the form $au+bv=1 , u,v\in R_S^* , a,b \in K^*$.

The last proposition of this section shows that after slightly enlarging any given set $S$, we can always write a map (or a point) in normalized form with respect to $S$.

\begin{proposition}\label{normalized map}
Let $\phi=[F:G]$  be an endomorphism of $\PP^1$, defined over $K$ with
$$F(X,Y)=a_0X^d+a_1X^{d-1}Y+...+a_{d-1}XY^{d-1}+a_dY^d $$
and
$$G(X,Y)=b_0X^d+b_1X^{d-1}Y+...+b_{d-1}XY^{d-1}+b_dY^d. $$
Then there exists a prime ideal $\pf_0$ of $K$and an element $\alpha\in K$ such that $\phi=[\alpha^{-1}F:\alpha^{-1}G]$ is in normalized form with respect to $S'=S \cup \{\pf_0\}$.
\end{proposition}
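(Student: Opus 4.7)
The plan is to reduce the problem to a statement about fractional ideals and then invoke Theorem \ref{ideal class}. Consider the $R_S$-fractional ideal
$$I = (a_0, a_1, \ldots, a_d, b_0, b_1, \ldots, b_d) \subset K$$
generated by all the coefficients of $F$ and $G$. By Remark \ref{normalize=principal} (applied to the pair $(F,G)$ rather than a point, but with the same reasoning), finding $\alpha \in K$ such that $[\alpha^{-1}F : \alpha^{-1}G]$ is normalized with respect to $S'$ amounts to showing that $I$ becomes a principal ideal when extended to $R_{S'}$, with generator $\alpha$. So the goal reduces to: enlarge $S$ by a single prime so that $I$ becomes principal.

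Next, I would apply Theorem \ref{ideal class} directly to $I$: there exist $\lambda \in K^*$ and a prime ideal $\pf_0$ of $R_S$ such that $I = (\lambda)\pf_0$ as $R_S$-fractional ideals. Set $S' = S \cup \{\pf_0\}$ and $\alpha = \lambda$. In the ring $R_{S'}$, the prime $\pf_0$ extends to the unit ideal, hence $I \cdot R_{S'} = (\alpha) R_{S'}$. Translating this back to valuations, for every prime $\pf \notin S'$ we have
$$v_\pf(\alpha) = \min\{v_\pf(a_0),\ldots, v_\pf(a_d), v_\pf(b_0),\ldots, v_\pf(b_d)\},$$
and therefore
$$\min\{v_\pf(\alpha^{-1} a_0), \ldots, v_\pf(\alpha^{-1}a_d), v_\pf(\alpha^{-1}b_0), \ldots, v_\pf(\alpha^{-1}b_d)\} = 0.$$
Moreover, $\alpha^{-1} a_i, \alpha^{-1} b_j \in R_{S'}$ by the same valuation computation, so the pair $(\alpha^{-1}F, \alpha^{-1}G)$ satisfies the definition of being normalized with respect to every $\pf \notin S'$, i.e., with respect to $S'$.

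There is no real obstacle here: the content is entirely encapsulated in Theorem \ref{ideal class}, which promises that every ideal class of $R_S$ contains a prime representative, and the rest is a straightforward dictionary between fractional ideals and the valuation-theoretic definition of normalized form. The only thing to be careful about is the verification that the analog of Remark \ref{normalize=principal} indeed applies to the coefficient ideal of a map $[F:G]$; this is proved exactly as for a point, replacing the two-element generator $(x,y)$ by the $(2d+2)$-element generator consisting of all coefficients. An entirely analogous statement and proof would give a normalized form for any single point $P \in \PP^1(K)$ after enlarging $S$ by at most one prime.
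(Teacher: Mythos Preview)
Your proof is correct and follows essentially the same route as the paper: form the fractional $R_S$-ideal generated by the coefficients, apply Theorem~\ref{ideal class} to write it as $(\lambda)\pf_0$, adjoin $\pf_0$ to $S$, and rescale by $\lambda^{-1}$. The paper's argument is the same, only slightly terser in the verification of the valuation condition; your closing remark about points is exactly the content of Proposition~\ref{normalized point}.
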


\begin{proof}
Consider the fractional ideal $I=(a_0,...,a_d,b_0,...,b_d)R_{S}$. Then by Theorem \ref{ideal class} there is a prime $\pf_I$ of $K$ and $\alpha_I \in K$ such that $I=(\alpha_I) \pf_I R_{S}$.

Consider the representation of $\phi$ given by $\phi=[\alpha_I^{-1}F:\alpha_I^{-1}G]$ and let $S'=S \cup \{\pf_I\}$. Then $v_\pf((\alpha_I^{-1}a_0,...,\alpha_I^{-1}a_d,\alpha_I^{-1}b_0,...,\alpha_I^{-1}b_d))=0$ for every $ \pf \notin S'$. In other words, $[\alpha_I^{-1}F:\alpha_I^{-1}G]$ is normalized with respect to $S'$.
\end{proof}

%\begin{remark}
%In a similar way, for a point $P \in \PP^1(K)$ we can increase the size of $S$ by one, say $S'$. So $P$ admit a normalized form with respect $S'$. The proof of this remark will follow exactly as the proof of the previous proposition therefore we will just state the fact without a proof.
%\end{remark}

\begin{proposition} \label{normalized point}
For every $P=[x:y] \in \PP^1(K)$ exists a prime ideal $\pf_0$ of $K$ and an element $\alpha\in K$ such that $P=[\alpha^{-1}x:\alpha^{-1}y]$ is in normalized form with respect to $S'=S \cup \{\pf_0\}$.
\end{proposition}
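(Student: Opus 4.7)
The proof will parallel that of Proposition \ref{normalized map} almost verbatim, simply replacing the coefficient ideal of a map by the coordinate ideal of a point. The plan is to convert the question of finding a normalized form into a question about principalizing a fractional ideal, and then invoke Theorem \ref{ideal class} to do so with the cost of enlarging $S$ by a single prime.

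Concretely, first I would form the $R_S$-fractional ideal $I = (x,y)R_S$. By Remark \ref{normalize=principal}, the point $P = [x:y]$ admits a normalized form with respect to a set of places $T \supseteq S$ if and only if $(x,y)R_T$ is principal, so it suffices to arrange this with $T = S \cup \{\pf_0\}$ for a single prime $\pf_0$. Applying Theorem \ref{ideal class} to $I$ yields a prime ideal $\pf_0$ of $R_S$ and an element $\alpha \in K^*$ with $I = (\alpha)\,\pf_0 R_S$.

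Next I would set $S' = S \cup \{\pf_0\}$ and verify that the rescaled representation $[\alpha^{-1}x : \alpha^{-1}y]$ is normalized with respect to $S'$. For any prime $\pf \notin S'$, the identity $I = (\alpha)\pf_0 R_S$ gives $(\alpha^{-1}x,\alpha^{-1}y)R_{S'} = \pf_0 R_{S'} = R_{S'}$ locally at $\pf$, hence $\min\{v_\pf(\alpha^{-1}x),v_\pf(\alpha^{-1}y)\} = 0$, which is exactly the definition of being in normalized form at $\pf$. Since this holds for every $\pf \notin S'$, the representation is normalized with respect to $S'$.

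There is no real obstacle here: the only nontrivial input is Theorem \ref{ideal class} (the strong form of Dirichlet's theorem that every ideal class of $R_S$ contains a prime), which has already been stated. If anything, the only subtlety is keeping the normalization of the valuation $v_\pf$ straight when passing between $R_S$-ideals and $R_{S'}$-ideals, but this is routine since $\pf \notin S'$ means $\pf \neq \pf_0$ and $\pf \notin S$, so $\pf_0 R_{S'}$ is the unit ideal at $\pf$.
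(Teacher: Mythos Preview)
Your proof is correct and matches the paper's approach exactly: the paper simply says ``The proof follows the proof of the previous proposition,'' and you have written out precisely that adaptation, replacing the coefficient ideal of $\phi$ by the coordinate ideal $(x,y)R_S$ and invoking Theorem~\ref{ideal class} in the same way.
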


\begin{proof}
The proof follows the proof of the previous proposition.
\end{proof}

%%%%%%%%%%% MAIN theorem (S-UNIT EQ)    %%%%%%%%%%%

\section{Proof of \Cref{th 3 periodic}}
   \label{main}

%%% NOTATION FOR THEOREM

For this section, we will state a notation presented in \cite{Canci2007}.

Let  $\mathbf{a_1},...,\mathbf{a_h}$ be a full system of integral representatives for the ideal classes of $R_S$. Hence, for each $i \in \{1,..,h\}$ there is an $S$-integer $\alpha_i \in R_S$ such that
$$ \mathbf{a_i}^h = \alpha_i R_S .$$
Let $L$ be the extension of $K$ given by
$$L=K(\zeta,\sqrt[h] \alpha_1 ,...,\sqrt[h] \alpha_h)$$
where $\zeta$ is a primitive $h$-th root of unity. Consider the following subgroups of $L^*$:
$$\sqrt{K^{*}} : = \{ a \in L^{*} : 	\exists m \in \mathbb{Z}_{> 0} \mbox{  with  } a^m \in K^{*}�\} $$
and
$$\sqrt{R_S^{*}} : = \{ a \in L^{*} : 	\exists m \in \mathbb{Z}_{> 0} \mbox{  with  } a^m \in R_S^{*}�\}. $$

Denote by $\mathbb{S}$ the set of places of $L$ lying above the places in $S$ and by $R_\mathbb{S}$ and $R_\mathbb{S}^{*}$ the ring of $\mathbb{S}$-integers and the group of $\mathbb{S}$-units, respectively in $L$. By definition $R_\mathbb{S}^{*} \cap \sqrt{K^{*}}= \sqrt{R_S^{*}}$ and $\sqrt{R_S^{*}}$ is a subgroup of $L^{*}$ of free rank $s-1$ by Dirichlet's unit theorem.

\begin{lemma} \label{representation}
Assume  the notation above. There exist fixed representations $[x_P:y_P] \in \PP^1(L)$ for every rational point $P \in \PP^1(K)$ satisfying the following two conditions.
\begin{enumerate}

\item [(a)] For every $P\in\PP^1(K)$, we have $x_P,y_P \in \sqrt {K
^{*}}$ and
$$ x_P R_\mathbb{S}+y_P R_\mathbb{S}=R_\mathbb{S}.$$

\item [(b)] If $P,Q\in\PP^1(K)$ then
$$ x_Py_Q-y_Px_Q \in \sqrt{K^{*}} .$$

\end{enumerate}
\end{lemma}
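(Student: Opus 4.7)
The plan is to leverage the fact that, although the $R_S$-fractional ideal $I_P := xR_S + yR_S$ attached to a point $P=[x:y]\in\PP^1(K)$ need not be principal in $R_S$, its extension $I_P R_{\mathbb{S}}$ becomes principal, with a canonical generator (modulo $\mathbb{S}$-units), in the enlarged ring $R_{\mathbb{S}}$. The construction begins by fixing, once and for all, for each $P$ a decomposition $I_P = \lambda_P\, \mathbf{a}_{i(P)}$ with $\lambda_P \in K^*$ and $i(P) \in \{1,\ldots,h\}$; this is possible because $\mathbf{a}_1,\ldots,\mathbf{a}_h$ is a full system of representatives for the ideal class group of $R_S$.

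The key technical step is to verify that $\mathbf{a}_i R_{\mathbb{S}} = \sqrt[h]{\alpha_i}\, R_{\mathbb{S}}$ inside $R_{\mathbb{S}}$. This follows by raising both sides to the $h$-th power: from $\mathbf{a}_i^h = \alpha_i R_S$ one has $(\mathbf{a}_i R_{\mathbb{S}})^h = \alpha_i R_{\mathbb{S}} = (\sqrt[h]{\alpha_i}\, R_{\mathbb{S}})^h$, and unique factorization of fractional ideals in the Dedekind domain $R_{\mathbb{S}}$ then forces the desired equality. Setting $\mu_P := \lambda_P\sqrt[h]{\alpha_{i(P)}} \in L^*$, one obtains $I_P R_{\mathbb{S}} = \mu_P R_{\mathbb{S}}$.

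With this in hand, I would define $x_P := x/\mu_P$ and $y_P := y/\mu_P$; since $\mu_P \neq 0$, we still have $[x_P:y_P] = P$ as a point of $\PP^1(L)$. Part (a) is then immediate: $x_P R_{\mathbb{S}} + y_P R_{\mathbb{S}} = \mu_P^{-1} I_P R_{\mathbb{S}} = R_{\mathbb{S}}$, and $x_P^h = x^h/(\lambda_P^h\, \alpha_{i(P)}) \in K^*$, and likewise for $y_P^h$, placing the nonzero coordinates in $\sqrt{K^*}$. For part (b), given $P \neq Q$ with $Q = [x':y']$ and associated $\mu_Q$, one computes
$$(x_P y_Q - y_P x_Q)^h = \frac{(xy' - yx')^h}{(\lambda_P \lambda_Q)^h\, \alpha_{i(P)}\alpha_{i(Q)}} \in K^*,$$
so $x_P y_Q - y_P x_Q \in \sqrt{K^*}$.

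The main obstacle, as far as there is one, is the identity $\mathbf{a}_i R_{\mathbb{S}} = \sqrt[h]{\alpha_i}\, R_{\mathbb{S}}$, which requires invoking uniqueness of $h$-th roots of fractional ideals in the Dedekind ring $R_{\mathbb{S}}$; once that step is secured, the rest of the argument is essentially bookkeeping with the scaling factors $\mu_P$.
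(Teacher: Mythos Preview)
Your proposal is correct and follows essentially the same approach as the paper: both scale the $K$-rational coordinates by $\lambda_P\sqrt[h]{\alpha_{i(P)}}$ (the paper writes $\lambda\sqrt[h]{\beta}$) and verify (a) and (b) by raising to the $h$-th power. If anything, you are more explicit than the paper in justifying the coprimality condition in (a), since you spell out the identity $\mathbf{a}_i R_{\mathbb{S}} = \sqrt[h]{\alpha_i}\,R_{\mathbb{S}}$ via uniqueness of $h$-th roots of ideals in the Dedekind domain $R_{\mathbb{S}}$, whereas the paper simply asserts that $x'R_{\mathbb{S}}+y'R_{\mathbb{S}}=R_{\mathbb{S}}$ is ``clear''.
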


\begin{proof}

Let $P=[x:y]$ be a representation of $P$ in $\PP^1(K)$ and consider  $\mathbf{b} \in \{ \mathbf{a_1},...,\mathbf{a_h} \}$ a representative of $x R_S+y R_S$. We can find   $\beta \in K^{*}$ such that $\mathbf{b}^h =\beta R_S$. Then there is $\lambda  \in K^{*}$ such that
\begin{equation} \label{lemma3.1}
(xR_S +yR_S)^h = \lambda^h\beta R_S.
\end{equation}

We define in $L$
$$ x'=\frac{x}{\lambda \sqrt[h]\beta}  \quad\quad\quad y'=\frac{y}{\lambda \sqrt[h]\beta}$$
and with this definition, it is clear that $x',y' \in \sqrt {K^{*}}$ such that $x' R_\mathbb{S}+y' R_\mathbb{S}=R_\mathbb{S}$.
\\ Furthermore, let $P=[x_1',y_1']$ and $Q=[x_2':y_2']$  where
$$ x_i'=\frac{x_i}{\lambda_i \sqrt[h]\beta_i}  \quad\quad\quad y_i'=\frac{y_i}{\lambda_i \sqrt[h]\beta_i}$$
and $\lambda_i ,\beta_i$ are as the ones described in equation (\ref{lemma3.1}) for $i\in \{1,2\}$. Then
$$(x_1'y_2'-y_1'x_2')^h = \frac{(x_1y_2-y_1x_2)^h}{\lambda_1^h \lambda_2^h \beta_1\beta_2} \in K^{*}. $$
\end{proof}

%% First theorem, ENOUGH P.P. points THEN bound on P. points.

\begin{proof} [Proof of Theorem \ref{th 3 periodic} part (a)]
Let $P_1,P_2,P_3$ be three different  $K$-rational tail points and let $n_i$ be the period of the periodic part of the orbit of $P_i$ with $i\in\{1,2,3\}$. Let $P$ be a $K$-rational periodic point such that $\phi^{mn_i}(P_i) \neq P$ for every $m\in \mathbb{Z}_{\geq 0}$ and $i\in\{1,2,3\}$ (if such a $P$ does not exist then $|\Per(\phi,K)| \leq 3$ and the proof will be complete).

By Lemma \ref{representation}, for every $i\in\{1,2,3\}$  there exist $P=[x:y]$ , $P_i=[x_i:y_i]$ with $x,y,x_i,y_i\in L$ such that
\begin{enumerate}
\item [(a)] $x_i R_\mathbb{S}+y_i R_\mathbb{S}=R_\mathbb{S} $,

\item [(b)]  $x R_\mathbb{S}+y R_\mathbb{S}=R_\mathbb{S}$,

\item [(c)] $x_iy-y_ix \in \sqrt{K^{*}} $.

\end{enumerate}

By $(a)$ and $(b)$ we have $\delta_{\pf'}(P,P_i)=v_{\pf'}(x_iy-y_ix)$ for every $\pf' \notin \mathbb{S}$ and every $i\in\{1,2,3\}$. Using Corollary \ref{cor. S-unit} we can find $\mathbb{S}$-units $	u_1,u_2,u_3 \in R_\mathbb{S}^{*}$ such that

\begin{equation} \label{eq 1}
x_1y-y_1x =u_1,
\end{equation}
\begin{equation} \label{eq 2}
 x_2y-y_2x =u_2,
\end{equation}
\begin{equation} \label{eq 3}
x_3y-y_3x =u_3.
\end{equation}

Notice that by  $(c)$ , $u_i \in \sqrt{K^{*}}\cap R_\mathbb{S}^{*}=\sqrt{R_S^{*}}$ for each $i\in\{1,2,3\}$.

Using equations (\ref{eq 1}) and (\ref{eq 2}) we get $x=\frac{u_1x_2}{y_2x_1-y_1x_2}-\frac{u_2x_1}{y_2x_1-y_1x_2}$ and $y=\frac{u_1y_2}{y_2x_1-y_1x_2}-\frac{u_2y_1}{y_2x_1-y_1x_2}$. Then by (\ref{eq 3}) we get
$$(x_3y_2-y_3x_2)u_1+(y_3x_1-x_3y_1)u_2 =  (y_2x_1-y_1x_2)u_3. $$
Thus
$$Au+Bv =  1 $$
where $A=\frac{(x_3y_2-y_3x_2)}{(y_2x_1-y_1x_2)}$, $B=\frac{(y_3x_1-x_3y_1)}{(y_2x_1-y_1x_2)}$, $u=u_1u_3^{-1}$ and $v=u_2u_3^{-1}$.

Notice that $A,B \neq 0$ since $P_2 \neq P_3  , P_1 \neq P_3$ and the denominator is not $0$ since $P_1\neq P_2$.

Hence by Corollary \ref{S-unit} with $\Gamma_0=\sqrt{R_S^{*}}$, the total number of solutions $(u,v)\in \sqrt{R_S^{*}}\times \sqrt{R_S^{*}}$ of $Au+Bv=1$ is bounded by $2^{8(2s)}$.

From equations (\ref{eq 1}) and (\ref{eq 3}), we can solve for $x/y$ in terms of $x_1,y_1,x_3,y_3,u$. Therefore there are  $2^{8(2s)}$ possible $[x:y]$. Finally notice that there are at most three periodic points $P$ such that $\phi^{mn_i}(P_i)=P$ for some $m\in\mathbb{Z}_{\geq0}$ and some $i\in\{1,2,3\}$.
Therefore
$$ |\Per(\phi,K)| \leq   2^{16s}+3.$$
\end{proof}

The proof of Theorem \ref{th 3 periodic} part (b) is similar and requires only minor changes at the start and conclusion of the proof.
%% Second theorem, ENOUGH P. points THEN bound on P. P. points.

\begin{proof}[Proof of \Cref{th 4 preperiodic} part (b)]
Let $P_1,P_2,P_3,P_4$ be 4 different $K$-rational periodic  points and let $n_i$ be the period of $P_i$ with $i\in\{1,2,3,4\}$. Let $P$ be a $K$-rational tail point such that $\phi^{m n_i}(P) \neq P_i$ for every $m\in \mathbb{Z}_{\geq 0}$ and $i \in \{1,2,3\}$.

By \Cref{representation} , for every $i\in\{1,2,3\}$  we can take $P=[x:y]$ , $P_i=[x_i:y_i]$ with $x,y,x_i,y_i\in L$ such that
\begin{enumerate}
\item [(a)] $x_i R_\mathbb{S}+y_i R_\mathbb{S}=R_\mathbb{S},$

\item [(b)]  $x R_\mathbb{S}+y R_\mathbb{S}=R_\mathbb{S},$

\item [(c)] $x_iy-y_ix \in \sqrt{K^{*}}.$

\end{enumerate}
Using the same argument of proof of \Cref{th 4 preperiodic} part (a), we get that there are $2^{8(2s)}$ possible $[x:y]$.

Now for the $K$-rational tail points given by $\phi^{m n_1}([x:y])=P_1$,$\phi^{m n_2}([x:y])=P_2$,$\phi^{m n_3}([x:y])=P_3$ we use the same argument with the triples $(P_2,P_3,P_4)$ , $(P_1,P_3,P_4)$ and $(P_1,P_2,P_4)$, respectively. In each case we get the same bound $2^{16s}$.

Therefore,
$$ |\Tail(\phi,K)| \leq  4(2^{16s}).$$
\end{proof}

\begin{proof}[Proof of \Cref{theorem 3}]  %%%% corollary 1.3
%%ADDING MORE DETAIL. I.E. BAKER THEOREM

We will prove that we can take a field extension of $K$ to a field $E$ such that $\phi$ has at least three $E$-rational tail points (resp.\ four $E$-rational periodic points) and $[E:K] \leq d^3$. In this case, let $S'$ be the set of places of $E$ lying above the places of $S$. Then the corollary follows by applying \Cref{th 3 periodic} to get 
$$|\Per(\phi,K)| \leq  |\Per(\phi,E)| \leq 2^{16|S'|}+3 = 2^{16|S|d^3}+3 $$
and 
$$|\Tail(\phi,K)| \leq  |\Tail(\phi,E)| \leq 4(2^{16|S'|})=4(2^{16|S|d^3}) . $$
respectively.

Part $(a)$. Assume $\phi$ has at least three periodic points; otherwise the bound trivially holds. By the Riemann-Hurwitz formula a rational function has at most two totally ramified points. Therefore at least one of our periodic points admits a non-periodic preimage. Let $P_1$ be one possible preimage of such a point and consider $E_1$ the field of definition of $P_1$ over $K$. Notice that $[E_1:K]\leq d$. 

Consider $P_2 , P_3 \in \PP^1(\bar{K})$ a preimage of $P_1$ and $P_2$, respectively. Let $E_2$ be the field of definition of $P_2$ over $E_1$ and $E$ the field of definition of $P_3$ over $E_2$. Notice that $[E_2:E_1]\leq d$, $[E:E_2]\leq d$, $P_2 \in \PP^1(E_2)$ and $P_3 \in \PP^1(E)$.

%Consider $P_2 \in \PP^1(\bar{K})$ a preimage of $P_1$. Let $E_2$ be the field of definition of $P_2$ over $E_1$. Notice that $[E_2:E_1]\leq d$ and  $P_2 \in \PP^1(E_2)$. Similarly, consider $P_3 \in \PP^1(\bar{K})$ a preimage of $P_2$. Let $E$ be the field of definition of $P_3$ over $E_2$. Notice that $[E:E_2]\leq d$ and  $P_3 \in \PP^1(E)$. 

So $[E:K]\leq d^3$ and  $\phi$ has at least three $E$-rational tail points.

Part $(b)$. If  $|\Per(\phi,K)| > 4$ then we can apply \Cref{th 3 periodic} to get the desired bound. Now assume $1 \leq |\Per(\phi,K)| \leq 3$.

Case 1: Suppose there exist a point $P\in \PP^1(K)$ of period 3 under $\phi$. Considering the field extend $E=K(Q)$ of $K$ where $Q$ is a fixed point of $\phi$. Notice that $[E:K] \leq d+1 \leq d^3$ by \Cref{degree dynatomic} and $\phi$ has at least four $E$-rational periodic points.

Case 2: Suppose there exists no periodic point of period 3 in $\PP^1(K)$ but there is a point $P\in \PP^1(\bar{K})-\PP^1(K)$ of period 3 under $\phi$. Considering the field extend $E=K(P)$ of $K$ we have that $\phi$ has a 3-periodic point on $E$. Notice that $[E:K] \leq d^3-d \leq d^3$ by \Cref{degree dynatomic} and $\phi$ has at least four $E$-rational periodic points since $1 \leq |\Per(\phi,K)|$.

Case 3: Suppose there exists no point $P\in \PP^1(\bar{K})$ of period 3 under $\phi$. Then by \Cref{Baker} and \Cref{Baker remark}, $\phi$ admits a point $P_1\in\PP^1(\bar{K})$ of period 2 and two distinct fixed points $P_2,P_3\in\PP^1(\bar{K})$. Since $1 \leq |\Per(\phi,K)| \leq 3$ we can assume that at least one of $P_1,P_2,P_3$ is $K$-rational. Let $E=K(P_1,P_2,P_3)$. Notice that $[E:K]\leq d^3$ by \Cref{degree dynatomic} and $|\Per(\phi,E)| \geq 4$.

\end{proof}

After assuming $|\Tail(\phi,K)| \geq 3$  ($|\Per(\phi,K)| \geq 4$), \Cref{th 4 preperiodic} $(a)$ and $(b)$ provides a bound for $|\Per(\phi,K)|$ ( $|\Tail(\phi,K)|$) independent of the degree of $\phi$. We claim that in order to get bounds for $|\Per(\phi,K)|$ and $|\Tail(\phi,K)|$ independent of the degree of $\phi$, the hypotheses  $|\Tail(\phi,K)| \geq 3$ and $|\Per(\phi,K)| \geq 4$ are required. This can be seen in section~\ref{Eg} where we provide a couple of examples that show our claim.

In order to improve the bounds given in this section, we have to overcome two technical obstacles:
\begin{enumerate}
\item [(a)] Due to the possibility of a nontrivial class group, every $P \in \PP^1(K)$ cannot always be written as $P=[x:y]$ with $x$ and $y$  coprime $S$-integers.

\item [(b)] In order to apply \Cref{th 4 preperiodic}, we extended the field $K$ to have enough $K$-tail (or $K$-periodic) points. However, after doing so, the degree of the rational function appears in the exponent of our bound.
\end{enumerate}

We overcome $(a)$ by analyzing the ideal class generated by $x$ and $y$ in $R_S$, when $P=[x:y]$ is preperiodic. To overcome $(b)$ we use the theory of Thue-Mahler equations, instead of $S$-unit equations, to avoid having to extend the field $K$. After using Thue-Mahler equations we will obtain that the degree of the rational function appears in a polynomial way in our bound. We will provide solutions to problems $(a)$ and $(b)$ in the next section.

%%%%%%%%%%% MAIN theorem 2 (THUE-MAHLER EQ)    %%%%%%%%%%%
%%%%%%%%%%% MAIN theorem 2 (THUE-MAHLER EQ)    %%%%%%%%%%%
%%%%%%%%%%% MAIN theorem 2 (THUE-MAHLER EQ)    %%%%%%%%%%%
%%%%%%%%%%% MAIN theorem 2 (THUE-MAHLER EQ)    %%%%%%%%%%%

 \section{Proof of \Cref{theorem 4} and \Cref{theorem 5} using Thue-Mahler equations}
   \label{main2}

First we will prove \Cref{theorem 4}. Assume the hypotheses in \Cref{theorem 4}.

Notice that if $\phi$ has at least four $K$-rational periodic points, then by \Cref{th 4 preperiodic}
$$|\Tail(\phi,K)| \leq 4(2^{16s}).$$
Therefore until the end of the proof of \Cref{theorem 4} we assume $|\Per(\phi,K)|  \leq 3$.

If $|\Per(\phi,K)|=0$ then  $|\Tail(\phi,K)|=0$. So there is nothing to prove in this case. The remaining possibilities can be divided into two cases: when $|\Per(\phi,K)|  =2$ or  $3$  and when $|\Per(\phi,K)|  = 1$.

Before we start analyzing these two cases, we will prove a proposition that will be useful in both.
%%PROPOSITION TO USE T-M
\begin{proposition} \label{prop to use T-M}
Let $K$ be a number field and $S$ a finite set of places of $K$ containing all the archimedean ones. Let $\phi$ be an endomorphism of $\PP^1$, defined over K and $d \geq 2$ the degree of $\phi$. Assume $\phi$ has good reduction outside $S$ and $\phi$ admits a normalized form with respect to $S$. Let $\mathscr{A}\subset \Tail(\phi,K)$ be such that every point in $\mathscr{A}$ admits a normalized form with respect to $S$. Then
$$|\mathscr{A}| \leq  \max\left \{  (5 * 10^6 (d^3+1))^{s+1} ,4(2^{64s})\right \}.$$
%$$|\mathscr{A}| \leq  \max\{  (5 * 10^6 (d^3+1))^{s+1} ,4(2^{8(8(s+1)+1)}) \}.$$
\end{proposition}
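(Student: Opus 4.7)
The plan is a dichotomy based on the $K$-irreducible factorization of the period-$3$ polynomial $\Psi := \Phi_{\phi,3}$. Since $\phi$ is normalized with respect to $S$ and has good reduction outside $S$, Remark~\ref{Silverman Theorem 2.18a} propagates normalization to every iterate, so $F_3, G_3 \in R_S[X,Y]$ and hence $\Psi(X,Y) = YF_3(X,Y) - XG_3(X,Y) \in R_S[X,Y]$ is a binary form of degree $d^3+1$, precisely matching the factor $d^3+1$ in the stated bound. Over $\bar K$, $\Psi$ factors as $c \prod_P (y_P X - x_P Y)$ where $P$ runs over periodic points of period dividing $3$. For $R = [r_1:r_2] \in \mathscr{A}$ in normalized form with respect to $S$ whose eventual periodic orbit has period coprime to $3$, Corollary~\ref{cor. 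S-unit} gives $v_{\pf}(y_P r_1 - x_P r_2) = \delta_{\pf}(R, P) = 0$ for every $\pf \notin S$ and every such $P$, forcing $\Psi(r_1, r_2) \in R_S^*$. The residual tail points, whose eventual periodic part has period $1$ or $3$, form a bounded correction that I would handle by a parallel Thue-Mahler application to $\Phi^*_{\phi,2}$ (whose degree $d^2-d$ is at least $3$ once $d\geq 3$), together with a direct inspection in the low-degree case $d=2$.

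I would then factor $\Psi = \prod_k \psi_k$ over $K$ into irreducibles. By Gauss's lemma, and, if necessary, enlarging $S$ by a single prime via \Cref{ideal class} to dispose of the class-group obstruction to primitivity, we may assume $\psi_k \in R_{S'}[X,Y]$ with $|S'| \leq s+1$, and then $\psi_k(r_1,r_2) \in R_{S'}^*$ for every $k$. In the favorable case that some $\psi_k$ has degree at least $3$, \Cref{Thue-Mahler} applied to $\psi_k$ bounds the number of $R_{S'}^*$-cosets of $(r_1,r_2) \in R_{S'}^2$ with $\psi_k(r_1,r_2) \in R_{S'}^*$ by $(5 \cdot 10^6 \deg \psi_k)^{|S'|} \leq (5 \cdot 10^6 (d^3+1))^{s+1}$. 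Since each $R \in \mathscr{A}$ admits a normalized representative that is unique up to an $R_{S'}^*$-scalar, distinct tail points give distinct cosets, and one obtains the first term of the maximum.

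In the complementary case, every $K$-irreducible factor of $\Psi$ has degree at most $2$, so every period-$1$ or period-$3$ point of $\phi$ lies in an extension of $K$ of degree $\leq 2$. Each such quadratic extension adjoins a Galois-conjugate pair of periodic points, and because $\Psi$ has $d^3+1 \geq 9$ roots a compositum of at most two such quadratic extensions produces a field $L$ with $[L:K] \leq 4$ and $|\Per(\phi,L)| \geq 4$, even in the worst case $|\Per(\phi,K)|=0$. The set $S_L$ of places of $L$ above $S$ satisfies $|S_L| \leq 4s$, so \Cref{th 4 preperiodic}(b) applied over $L$ gives $|\mathscr{A}| \leq |\Tail(\phi,K)| \leq |\Tail(\phi,L)| \leq 4 \cdot 2^{16|S_L|} \leq 4 \cdot 2^{64s}$, the second term of the maximum. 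The main obstacle I anticipate is the careful bookkeeping of the exceptional tail points whose eventual period divides $3$: they force the use of an auxiliary form, and the estimate from that auxiliary step must be shown to be absorbed by the two main terms without degrading the final bound.
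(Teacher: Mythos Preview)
Your architecture---dichotomize on whether some periodic point of small period has degree $\geq 3$ over $K$, apply Thue--Mahler in one branch, and pass to a degree-$\leq 4$ extension with four periodic points in the other---is exactly the paper's strategy. The paper phrases the Thue--Mahler branch via the norm form $N_{E/K}(aY-bX)$ of a single non-$K$-rational periodic point $P_*=[a:b]$ of period $1$, $2$, or $3$; this norm form is precisely your irreducible factor $\psi_k$. The obstacle you flag disappears once you notice one thing: since $\psi_k$ is irreducible over $K$ of degree $\geq 3>1$, its roots are \emph{not $K$-rational}, so no root of $\psi_k$ can equal $\phi^{mn}(R)$ for any $K$-rational tail point $R$. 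Hence Corollary~\ref{cor. S-unit}, applied over $E=K(P_*)$, gives $\delta_{\pf'}(R,P_*)=0$ and thus $\psi_k(r_1,r_2)\in R_{S'}^*$ for \emph{every} $R\in\mathscr A$, with no restriction on the eventual period of $R$. There is no residual case and no auxiliary appeal to $\Phi^*_{\phi,2}$; the Thue--Mahler bound covers all of $\mathscr A$ at once. Your detour through $\Psi(r_1,r_2)\in R_S^*$ is what forces the restriction, because the $K$-rational roots of $\Psi$ may lie in the orbit of $R$.

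In the complementary branch there is a smaller gap: $d^3+1\geq 9$ counts roots of $\Psi=\Phi_{\phi,3}$ with multiplicity, not distinct roots. For $d=2$ maps in the Baker exceptional case $(n,d)=(3,2)$ there is no period-$3$ cycle, and $\Phi_{\phi,3}$ may have only two or three distinct roots (all fixed points), so you cannot extract four periodic points from $\Psi$ alone. The paper handles this by running the dichotomy over periodic points of period $1$, $2$, \emph{or} $3$, and in the ``all degrees $\leq 2$'' branch invoking Baker's theorem together with Kisaka's classification (Remark~\ref{Baker remark}) to guarantee a period-$2$ point and two distinct fixed points, hence four distinct periodic points in an extension of degree at most $4$.
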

 \begin{proof}
Suppose that there exists a $\bar{K}$-rational periodic point  $P_{*}$ of period $1,2$ or $3$ such that $[E:K] \geq 3$ where $E=K(P_{*})$. Notice that $[E:K] \leq d^3$.

Let $S_E$ be the set of places of $E$ lying above places in $S$. Applying  \Cref{normalized point} to $P_{*}$ and $S_E$, we can find a prime $\pf_E$ in $E$ such that $P_{*}$ can be written in normalized form with respect to $S_E \cup \{\pf_E\}$. Consider  $S'=S \cup \{\pf_K\}$ where $\pf_K$ is the prime of $K$ lying below $\pf_E$ and let $S_E' $ be the set of places in $E$ lying above places in $S'$.

Let $P=[x:y] \in \mathscr{A}$ be in normalized form with respect to $S'$ and  $P_{*}=[a:b]\in \PP^1(E)$ in normalized form with respect to $S_E'$. Notice that $P_{*}$ is not in the orbit of $P$ since it is not $K$-rational. 

For every prime $\pf_E' \notin S_E'$, $\delta_{\pf_E'}(P,P_{*})=0$. Then for every $\pf_E' \notin S_E' $
\begin{equation}\label{eq in T-H}
v_{\pf_E'}(ay-bx)=0.
\end{equation}

Denote by $N_{E/K}$ the norm from $E$ to $K$ and consider $F(X,Y)=N_{E/K}(aY-bX)\in K[X,Y]$ where the embedding of $E$ over $K$ act trivially on $X$ and $Y$. Since $P_{*}$ is in normalized form with respect to $S_E'$, we have that $a,b\in R_{E,S_E'} $. Hence $F(X,Y)\in R_{K,S'}[X,Y]$.  Notice that the degree of $F$ is $[E:K]$. Since $P_{*}$ is a root of $F(X,Y)$ and $E$ is the field of definition of $P_{*}$ we have that $F(X,Y)$ is irreducible over $K$. Finally using that every $P=[x:y]\in \mathscr{A}$ is in normalized form with respect to $S^{'}$ and equation (\ref{eq in T-H}) we have $F(x,y) \in R_{K,S'}^{*}$.

Now we have all the hypotheses to apply \Cref{Thue-Mahler}. Therefore in this case we get
$$|\mathscr{A}| \leq (5 * 10^6 [E:K])^{s+1} \leq (5 * 10^6 d^3)^{s+1}.$$
%THIS WAS PAG 13%
%%ADDING MORE DETAIL. I.E. BAKER THEOREM
Now suppose that for every $\bar{K}$-periodic point $P$ of period $1,2$ or $3$, we have $[K(P):K] \leq 2$. We claim that in this case we can find a field $E$ of degree $[E:K] \leq 4$ such that $\phi$ has at least 4 distinct $E$-rational periodic points.
To prove the claim we just need to use \Cref{Baker} and \Cref{Baker remark} as follows.

\begin{enumerate}
\item[Case 1:] There exists a point $P\in \PP^1(\bar{K})$ of period 3 under $\phi$. Let $Q\in \PP^1(\bar{K})$ be a fixed point of $\phi$ and $E=K(P,Q)$. Then by assumption $[E:K] \leq 4$ and we have $|\Per(\phi,E)| \geq 4$.

\item[Case 2:] There does not exist a point $P\in\PP^1(\bar{K})$ of period 3 under $\phi$. By \Cref{Baker} and \Cref{Baker remark}, $\phi$ admits a point $P_1\in\PP^1(\bar{K})$ of period 2 and two distinct fixed points $P_2,P_3�\in\PP^1(\bar{K})$. Since $1 \leq |\Per(\phi,K)|\leq 3$, we can assume that at least one of $P_1,P_2,P_3$ is $K$-rational. Let $E=K(P_1,P_2,P_3)$. Then again we have $[E:K] \leq 4$ and $|\Per(\phi,E)| \geq 4$.
\end{enumerate}

Then by \Cref{th 4 preperiodic}
$$|\mathscr{A}|  \leq |\Tail(\phi,K)| \leq |\Tail(\phi,E)| \leq 4(2^{16(4(s))})=4(2^{64s}).$$
%%Proof of claim

In any case
$$|\mathscr{A}|  \leq  \max\left\{  (5 * 10^6 (d^3+1))^{s+1} ,4(2^{64s}) \right \}.$$

 \end{proof}

Notice that if $R_S$ is a PID then \Cref{theorem 4} follows immediately from  \Cref{prop to use T-M}.

  %%%%THEOREM \Cref{theorem 4}.
  \begin{proof}[Proof of \Cref{theorem 4}]

  %%%%
  %%%%%   BETWEEN 2 AND 3 PERIODIC CASE
  %%%%%%

\textbf{Case 1:} $|\Per(\phi,K)|  \in \{2,3\}$

By \Cref{normalized map} we can assume $\phi$ is in normalized form with respect to $S_1$, for some $S_1$ with $|S_1|=|S|+1$ and $S \subset S_1$.

Let $P_1=[x_1:y_1],P_2=[x_2:y_2]$ be two different $K$-rational periodic points. For every $P=[x_P:y_P] \in  \Tail(\phi,K)$ there is $i_P\in \{1,2\}$ such that
$$\delta_\pf(P,P_{i_P}) =0 \quad  \mbox{for every} \quad \pf \notin S_1  .$$
Then
$$  (xy_{i_P}-yx_{i_P})R_{K,S_1}=(x_P,y_P)(x_{i_P},y_{i_P})R_{K,S_1} \quad \mbox{for every} \quad P \in \Tail(\phi,K). $$

Applying \Cref{normalized point} on $P_1,P_2$ and $S_1$, we can find a representation of $P_1$ and $P_2$ such that $P_1=[x_1':y_1']$ and $P_2=[x_2':y_2']$ are in normalized form with respect to $S_2$, for some $S_2$ with $S_1 \subset S_2$ and $|S_2| =|S_1|+2$. Hence, for every $P \in \Tail(\phi,K) $
$$(xy'_{i_P}-yx'_{i_P})R_{K,S_2}=(x_P,y_P)R_{K,S_2}$$
and $x_P$ and $y_P$ generate a principal $R_{K,S_2}$-ideal. Therefore, for every $P \in  \Tail(\phi,K)$ we can find a representation of $P$ that is normalized with respect to $S_2$, namely $P=[\alpha^{-1}_Px_P:\alpha^{-1}_Py_P]$, where $\alpha_P=x_Py'_{i_P}-y_Px'_{i_P}$ (\Cref{normalize=principal}).

Every point $P \in  \Tail(\phi,K)$ admits a normalized form with respect to $S_2$  and  $\phi$ is in normalized form with respect to $S_2$ with good reduction outside $S_2$. Applying \Cref{prop to use T-M} gives
$$|\Tail(\phi,K)| \leq   \max\left\{  (5 * 10^6 (d^3+1))^{s+4} ,4(2^{64(s+3)})\right \}.$$
  %%%%
  %%%%%   JUST 1 PERIODIC POINT CAS
  %%%%%%

\textbf{Case 2:} $|\Per(\phi,K)|  =1$

By \Cref{normalized map} we can assume $\phi$ is in normalized form with respect to $S_1$, for some $S_1$ with $|S_1|=|S|+1$ and $S \subset S_1$. Let $Q\in \PP^1(K)$ be the only $K$-rational periodic point.  Applying \Cref{normalized point} on $Q$ and $S_1$, we can find a representation of $Q$ such that $Q=[q_1:q_2]$ is in normalized form with respect to $S_2$, for some $S_2$ with $S_1 \subset S_2$ and $|S_2| =|S_1|+1$.

Let $P=[x_P:y_P] \in \Tail(\phi,K)$. Since $\phi=[F,G]$ is in normalized form with respect to $S_2$ and $\phi$ has good reduction outside $S_2$,
Thus 
$$v_\pf( (F(x_P,y_P),G(x_P,y_P)) )=v_\pf((x_P,y_P)^d)\quad \mbox{for every}\quad\pf \notin S_2.$$
Therefore, 
\begin{equation} \label{eq emma}
(F(x_P,y_P),G(x_P,y_P))=(x_P,y_P)^d  \quad\mbox{for every as}\quad R_{K,S_2}\mbox{-ideals.} 
\end{equation}
Applying the last equality repeatedly we get that the $R_{K,S_2}$-ideal class $[(x_P,y_P)^{d^{n}}]=[(q_1,q_2)]=[1]$  is trivial for some $n >0$ depending on $P$.

%$$\Phi^{*}_{\phi,n}(X,Y)=\Pi_{k/n}(YF_k(X,Y)-XG_k(X,Y))^{\mu(n/k)}  $$
%by theorem.......$\Phi^{*}_{\phi,n}(X,Y)$ is a polynomial.
Assume the notation of \Cref{dynatomic}. By \Cref{dynatomic} there are at most two values of $n$ such that
$$a_{Q}^{*}(n)=\ord_{Q}(\Phi^{*}_{\phi,n}(X,Y)) \neq 0. $$
Since $a_{Q}^{*}(1) \neq 0$ we get that either $a_{Q}^{*}(2) = 0$ or $a_{Q}^{*}(3) = 0$. Set $l=\min \{i :a_{Q}^{*}(i) = 0\}$.

Consider $\Phi^{*}_{\phi,l}(X,Y)$ and notice that every root  of $\Phi^{*}_{\phi,l}$ is a periodic point of period $1$ or $l$, different from $Q$. Let

$$\Phi^{*}_{\phi,l}(X,Y)=cf_1(X,Y)^{\alpha_1}\cdots f_i(X,Y)^{\alpha_i}\cdots f_r(X,Y)^{\alpha_r} $$
be the irreducible factorization of $\Phi^{*}_{\phi,l}(X,Y)$ over $K$ and $c\in K^{*}$. Let $e_i=\deg f_i$ for $i=1,...,r$. Note that the degree of $\Phi^{*}_{\phi,l}$ is $d^l-d$.

Fix $i\in\{1,...,r\}$. Let $Q_i=[a_i:b_i] \in\PP^1( \bar{K})$ be a root of $f_i(X,Y)$. Consider $E_i=K(Q_i)$ the field of definition of $Q_i$ and $e_i=[E_i:K]$. Let $S_{E_i}$ be the set of places of $E_i$  lying above places of $S_2$.
% and let $B_i=R_{E_i,S_{E_i}} $and $A=R_{K,S_2} $.

Denote by $N_{E_i/K}$ the norm from $E_i$ to $K$ and notice that $f_i(X,Y)=N_{E_i/K}(a_iY-b_iX)\in K[X,Y]$ up to a constant. For every $P \in \Tail(\phi,K)$ and for every $\pf_{E_i} \notin S_{E_i}$ we have $ \delta_{\pf_{E_i}}(P,Q_i)=0. $
Then
\begin{equation}\label{4.aux}
(x_Pb_i-y_Pa_i)=(a_i,b_i)(x_P,y_P) \mbox{   as $R_{E_i,S_{E_i}}$-ideals.}
\end{equation}
%To set notation and avoid possible confusions we will write $N_{ E_i/K}$ for the field norm applied to elements in $E_i$ with output an element in $K$ and we will write  $N_{ R_{E_i,S_{E_i}}/R_{K,S_2}}$ for the relative norm applied to $R_{E_i,S_{E_i}}$-ideals with output an $R_{K,S_2}$-ideal.

%Applying $N_{ R_{E_i,S_{E_i}}/R_{K,S_2}}$ to (\ref{4.aux}) we get

%$$N_{ R_{E_i,S_{E_i}}/R_{K,S_2}}((xb_i-ya_i))=N_{ R_{E_i,S_{E_i}}/R_{K,S_2}}((a_i,b_i))N_{ R_{E_i,S_{E_i}}/R_{K,S_2}}((x,y))$$
%Denote by $N_{ R_{E_i,S_{E_i}}/R_{K,S_2}}$ for the relative norm. 
Applying $N_{E_i/K}$ to (\ref{4.aux}) we get

\begin{equation}\label{5.aux}
(f_i(x_P,y_P))R_{K,S_2}=I_i (x_P,y_P)^{e_i}R_{K,S_2}
\end{equation}
where $I_i=N_{E_i/K}((a_i,b_i))$ is an $R_{K,S_2}$-ideal. Taking appropriate powers and multiplying over all $i$ gives
\begin{equation}\label{6.aux}
(\Phi^{*}_{\phi,l}(x_P,y_P))R_{K,S_2}=I (x_P,y_P)^{\sum_i \alpha_i e_i}R_{K,S_2}
\end{equation}
where $I=\Pi_i I_i^{\alpha_i}$ is an $R_{K,S_2}$-ideal.

By \Cref{ideal class} applied to the $R_{K,S_2}$-ideal $I$, there is a prime ideal $\pf_0$ in $K$ and $\beta_I \in K $ such that $(\beta_I)I=\pf_0R_{K,S_2}$.
Consider $S_2'=S_2\cup \{ \pf_0 \}$ then multiplying  (\ref{5.aux}) by $\beta_I$ we get

$$\beta_I (\Phi^{*}_{\phi,l}(x_P,y_P))R_{K,S_2}=\beta_I I(x_P,y_P)^{d^l-d}R_{K,S_2}=\pf_0R_{K,S_2} (x_P,y_P)^{d^l-d}R_{K,S_2}.$$
Notice that $\pf_0R_{K,S_2}$ is the trivial ideal in $R_{K,S_2'}$. Therefore
\begin{equation}
\beta_I (\Phi^{*}_{\phi,l}(x_P,y_P))R_{K,S_2'}= (x_P,y_P)^{d^l-d}R_{K,S_2'}.
\end{equation}

%%% CONCLUSION
Thus, the ideal class of $(x_P,y_P)^{d^l-d}$ in $R_{K,S_2'}$ is trivial. Then the ideal class of $(x_P,y_P)^{d^{n}}$ in $R_{K,S_2'}$ is trivial since the ideal class of $(x_P,y_P)^{d^{n}}$ in $R_{K,S_2}$ is trivial . Taking the g.c.d. of $d^l-d$  and $d^{n}$ we get that the ideal class of $(x_P,y_P)^d$ in $R_{K,S_2'}$ is trivial.

Let $\mathscr{A}$ be the set of all $K$-rational tail points excluding the initial point in each maximal orbit. Using equation (\ref{eq emma}) and \Cref{normalize=principal} every point $P \in \mathscr{A}$ admits a normalized form with respect to $S_2'$.

Now applying \Cref{prop to use T-M} to $\mathscr{A}$ and $S_2'$, we get
$$|\mathscr{A}|  \leq  \max\left\{  (5 * 10^6 (d^3+1))^{s+4} ,4(2^{64(s+3)}) \right\}.$$
This gives us
$$|\Tail(\phi,K)|\leq d|\mathscr{A}|   \leq  d\max\left \{  (5 * 10^6 (d^3+1))^{s+4} ,4(2^{64(s+3)}) \right \}.$$

 \end{proof}

 %%%%%%%%%%%% THEOREM 1.5%%%%%%%%%%%%%%%%%%%%
  %%%%%%%%%%%% THEOREM 1.5%%%%%%%%%%%%%%%%%%%%
   %%%%%%%%%%%% THEOREM 1.5%%%%%%%%%%%%%%%%%%%%
    %%%%%%%%%%%% THEOREM 1.5%%%%%%%%%%%%%%%%%%%%

Now we will prove \Cref{theorem 5}.

Assume the hypotheses in \Cref{theorem 5}. Hence  $|\Tail(\phi,K)| \geq 1 $.

Notice that if $\phi$ has at least three $K$-rational tail points, then by \Cref{th 3 periodic} we have that
$$|\Per(\phi,K)| < 2^{16s}+3. $$

Therefore in the rest of the section we assume $|\Tail(\phi,K)| \in \{1,2\} $.

As before, we will need to prove a proposition to use in the proof of \Cref{theorem 5}.
%%PROPOSITION TO USE T-M
\begin{proposition} \label{prop to use T-M2}
Let $\phi$ be an endomorphism of $\PP^1$, defined over K. Let $d \geq 2$ be the degree of $\phi$. Assume $\phi$ has good reduction outside $S$ and $\phi$ is in normalized form with respect to $S$. Let $\mathscr{A}�\subset \Per(\phi,K)$ such that every point in $\mathscr{A}$ admits a normalized form with respect to $S$. Then
$$|\mathscr{A}| \leq  \max\left\{  (5 * 10^6 (d-1))^{s+1} ,4(2^{128s})\right \}.$$

\end{proposition}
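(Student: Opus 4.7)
The plan is to mimic the proof of \Cref{prop to use T-M} with the roles of tail and periodic points interchanged: a tail point of moderate degree plays the role previously played by a low-period periodic point, and \Cref{cor. S-unit} provides the Diophantine input. First I would dispose of the trivial case $\mathscr{A}=\emptyset$; otherwise fix $P_0=[p_1:p_2]\in\mathscr{A}$ of period $n$. Every preimage of $P_0$ under $\phi$ other than $\phi^{n-1}(P_0)\in\PP^1(K)$ is a tail point of $\phi$, and these $d-1$ tails (with multiplicity) are the roots of the binary form $h(X,Y)$ of degree $d-1$ obtained by dividing $p_2F(X,Y)-p_1G(X,Y)$ by the $K$-rational linear factor cutting out $\phi^{n-1}(P_0)$.

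Case 1 is when some irreducible factor of $h$ over $K$ has degree at least $3$. I pick a root $R_*=[a:b]$ of such a factor, so $R_*$ is a tail with $3\le [K(R_*):K]\le d-1$, and I then repeat the Thue--Mahler setup of \Cref{prop to use T-M} line by line with $R_*$ in place of $P_*$. Namely, using \Cref{normalized point} I enlarge $S_E$ by at most one prime $\pf_E$ of $E=K(R_*)$ so that $R_*$ is normalized, and set $S'=S\cup\{\pf_K\}$ for $\pf_K$ the prime of $K$ below $\pf_E$, so $|S'|\le s+1$. For each $P=[x:y]\in\mathscr{A}$ (normalized with respect to $S$, hence $S'$), \Cref{cor. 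S-unit} applied over $E$ to the tail $R_*$ and the periodic point $P$ gives $v_{\pf_E'}(ay-bx)=0$ for every $\pf_E'\notin S_E'$, with the only possible exception being the single point $\phi^{n-1}(P_0)=\phi^{mn}(R_*)$ (the $K$-rational periodic preimage of $P_0$), which contributes at most a $+1$ absorbed by the main bound. Hence $F^{\sharp}(X,Y)=N_{E/K}(aY-bX)\in R_{K,S'}[X,Y]$ is irreducible over $K$ of degree $[E:K]\le d-1$, and $F^{\sharp}(x,y)\in R_{K,S'}^{*}$, so \Cref{Thue-Mahler} provides at most $(5\cdot 10^6(d-1))^{s+1}$ $R_{K,S'}^{*}$-cosets and thus projective points.

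Case 2 is when every irreducible factor of $h$ has degree at most $2$, so the $d-1$ preimages of $P_0$ off its orbit are tails each defined over an extension of $K$ of degree $\le 2$. I produce three distinct such tails $R_1,R_2,R_3$: if $P_0$'s preimages already supply them, take them directly; otherwise iterate one more level of preimages, and if at that level a degree-$\ge 3$ irreducible factor appears I fall back into Case 1 applied there (with a slightly worse but still $\max$-absorbed Thue--Mahler bound), and otherwise collect three distinct tails at the deeper level. Setting $E=K(R_1,R_2,R_3)$ gives $[E:K]\le 2^3=8$, $|S_E|\le 8s$, and $\phi$ has good reduction outside $S_E$ with at least three $E$-rational tail points, so \Cref{th 3 periodic}(a) applied over $E$ yields
$$|\mathscr{A}|\le|\Per(\phi,E)|\le 2^{16\cdot 8s}+3\le 4\cdot 2^{128s},$$
the second term in the maximum.

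The main obstacle is the bookkeeping in Case 2 when the degree $d$ is small, because the preimages of a single $K$-rational periodic point produce only $d-1$ (possibly coincident) tails; one must iterate preimages while checking that the compositum of the three chosen fields of definition remains inside an extension of degree $\le 8$ over $K$ and that any deeper-level high-degree factor keeps the overall bound inside the stated maximum.
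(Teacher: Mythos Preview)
Your overall architecture matches the paper's: split into a Thue--Mahler case and a ``low-degree tails'' case handled by \Cref{th 3 periodic}(a). Case~1 is carried out correctly and agrees with the paper's argument (the bound $[E:K]\le d-1$ is exactly what the paper uses, coming from the fact that the periodic preimage of a $K$-rational periodic point is itself $K$-rational, so it splits off a linear factor).

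The divergence, and the gap, is in Case~2. The paper does \emph{not} iterate preimages of a single fixed $P_0$. Instead it works in breadth: assuming $|\Per(\phi,K)|\ge 3$ (otherwise the bound is trivial), it chooses three distinct $K$-rational periodic points $Q_1,Q_2,Q_3$ and for each takes a tail preimage $P_i$. The Case~2 hypothesis (every non-$K$-rational tail mapping into $\Per(\phi,K)$ has degree $\le 2$) then forces $[K(P_i):K]\le 2$, so $[K(P_1,P_2,P_3):K]\le 8$ and \Cref{th 3 periodic}(a) gives $|\mathscr{A}|\le 2^{128s}+3\le 4\cdot 2^{128s}$.

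Your depth-iteration instead looks at $\phi$-preimages of a tail $R_1$ already of degree $\le 2$ over $K$. Those second-level preimages are roots of a degree-$d$ form with coefficients in $K(R_1)$, hence can have degree up to $2d$ over $K$; if you fall back to Thue--Mahler there, the bound becomes $(5\cdot 10^6\cdot 2d)^{s+1}$, which for large $d$ exceeds \emph{both} terms in the stated maximum. So the phrase ``slightly worse but still $\max$-absorbed'' is not justified, and this is a genuine gap precisely where you flagged the main obstacle. The remedy is simply to switch from depth to breadth as the paper does.

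Two smaller remarks. First, the roots of your $h$ need not all be tails: if $\phi^{n-1}(P_0)$ is a multiple preimage of $P_0$, it survives in $h$; this is harmless once noticed but should be said. Second, the paper's proof quietly uses that a $K$-rational periodic point has a $K$-rational periodic preimage (its predecessor on the cycle), which is why the degree bound is $d-1$ rather than $d$; you use this correctly in Case~1.
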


 \begin{proof}
Suppose that for every tail point $P_{*} \in \PP^1(\bar{K}) -\PP^1(K)$ such that $\phi(P_{*})$ is a $K$-rational periodic point, $[K(P_{*}):K] \geq 3$ where $E=K(P_{*})$ is the field of definition of $P_{*}$. Then the same proof as the first part of the proof of \Cref{prop to use T-M} yields the desired result ( notice that $[E:K] \leq d-1$).

Now suppose that for every tail point $P_{*} \in \PP^1(\bar{K}) -\PP^1(K)$ such that $\phi(P_{*})$ is a $K$-rational periodic point, $[K(P_{*}):K] < 3$. In this case, assume we can find three different $K$-rational periodic points $Q_1,Q_2,Q_3$ (otherwise $ \Per(\phi,K)$ is trivially bounded ). We can find three different tail points $P_i \in \PP^1(\bar{K}) - \PP^1(K)$ such that $\phi(P_i)=Q_i$ and $1 \leq [K(P_i):K] \leq 2$ where $1\leq  i \leq 3$. Applying \Cref{th 4 preperiodic} gives
 $$|\mathscr{A}| \leq  4(2^{128s}).$$

Therefore we get
$$|\mathscr{A}| \leq  \max\left \{  (5 * 10^6 (d-1))^{s+1} ,4(2^{128s})\right \}.$$

 \end{proof}

Notice that if $R_S$ is a PID then every point in $\Per(\phi,K)$ admits a normalized form with respect to $S$. Thus, in this case \Cref{prop to use T-M2} gives a bound for $\Per(\phi,K)$ and the hypothesis on the existence of a $K$-rational tail point will not be required in \Cref{theorem 5}.

  %%%%THEOREM \Cref{theorem 5}.
  \begin{proof}[Proof of \Cref{theorem 5}]
  Similarly, as in the proof of Case I of \Cref{theorem 4} we the only change of using \Cref{prop to use T-M2} instead of \Cref{prop to use T-M} to obtain
$$|\Per(\phi,K)| \leq   \max\left \{  (5 * 10^6 (d-1))^{s+3} ,4(2^{128(s+2)})\right \}+1.$$
 \end{proof}

%%%%%%%%%%% EXAMPLES and comparations     %%%%%%%%%%%

\section{Examples}
   \label{Eg}

In this section we will present two examples that show the sharpness of the hypotheses of \Cref{th 3 periodic} part $(a)$ and $(b)$.

The first example gives a family of rational functions with exactly two $\mathbb{Q}$-rational tail points and a fixed set of places of bad reduction. However the size of the set of $\mathbb{Q}$-rational periodic points grows with the degree of the rational functions in the family. This proves that the  hypothesis of \Cref{th 3 periodic} part $(a)$ is necessary.

\begin{example} \label{example2}
Consider 
\begin{equation*}
f_d(x)=\displaystyle\frac{1}{x}+\displaystyle\frac{(x-2^{-d})(x-2^{-d+1})...(x-1)...(x-2^{d-1})(x-2^d)}{x^{2d+1}} \quad \in \mathbb{Q}(x). 
\end{equation*}

If we take $S= \{ \infty , 2 \}$ then $f_d(x)$ has good reduction outside $S$.

Now notice that $0$ and $\infty$ are tail points and $1$ is a fixed point with orbit $0 \rightarrow \infty \rightarrow 1 \rightarrow 1$. Also for every $i\in \left \{-d,..,-1,1,..,d \right \} $ the points $2^{i}$ are $\mathbb{Q}$-rational periodic points of period 2.

Finally by  \Cref{th 3 periodic} if $d$  is large enough, then $\Tail(f_d,\mathbb{Q})=\{0,\infty\}$. Thus, this gives an example of a family of rational functions $f_d$ such that each rational function $f_d$ has exactly two $\mathbb{Q}$-rational tail points, good reduction outside of a fixed finite set of places $S$, and the number of $\mathbb{Q}$-rational periodic points grows with the degree of $f_d(x)$.
\end{example}

The second example gives a family of rational functions with exactly three $\mathbb{Q}$-rational periodic points and a fixed set of places of bad reduction. However the size of the set of $\mathbb{Q}$-rational tail points grows with the degree of the rational functions in the family. This proves that the  hypothesis of \Cref{th 3 periodic} part $(b)$ is necessary.

\begin{example} \label{example1}
Consider 
\begin{equation*}
f_d(x)=\displaystyle\frac{(x-1)(x-2)(x-2^2)...(x-2^{d-1})}{x^d} \quad \in \mathbb{Q}(x).
\end{equation*}

If we take $S= \{ \infty , 2 \}$ then $f_d(x)$ has good reduction outside $S$.

Now we notice that 0 is a periodic point with orbit $0 \rightarrow \infty \rightarrow 1 \rightarrow 0$ and that $2,..,2^{d-1}$ are in the tail of 0.

Finally by  \Cref{th 4 preperiodic} if $d$ is large enough, then $\Per(f_d,\mathbb{Q})=\{0,1,\infty\}$. Thus, this gives an example of a family of rational functions $f_d$ such that each rational function $f_d$ has exactly three $\mathbb{Q}$-rational periodic points, good reduction outside of a fixed finite set of places $S$, and the number of $\mathbb{Q}$-rational tail points grows with the degree of $f_d(x)$.
\end{example}

\bibliographystyle{plain}

\bibliography{bibfile}

\end{document}